\numberwithin{equation}{section}
\newtheorem{theorem}{Theorem}
\newtheorem{proposition}[theorem]{Proposition}
\newtheorem{lemma}[theorem]{Lemma}
\newtheorem{remark}[theorem]{Remark}
\begin{document}

\title[Root System]{Root System of a Perturbation of a Selfadjoint Operator with Discrete Spectrum}

\author{James Adduci}

\address{Department of Mathematics,
The Ohio State University,
 231 West 18th Ave,
Columbus, OH 43210, USA}

\email{adducij@math.ohio-state.edu}

\author{Boris Mityagin}

\address{Department of Mathematics,
The Ohio State University,
 231 West 18th Ave,
Columbus, OH 43210, USA}

\email{mityagin.1@osu.edu}

\subjclass[2000]{47E05, 34L40, 34L10}

\begin{abstract}
We analyze the perturbations $T+B$ of a selfadjoint operator $T$  in a Hilbert space $H$ with discrete spectrum 
$\{ t_k \}$ , $T \phi_k = t_k \phi_k$, as an extension of our constructions in \cite{admt2} where $T$ was a harmonic
oscillator operator. In particular, if $t_{k+1}-t_k \geq c k^{\alpha - 1}, \quad \alpha > 1/2$ and 
$\| B \phi_k \| = o(k^{\alpha - 1})$ then the system of root vectors of $T+B$, eventually eigenvectors of geometric multiplicity $1$, 
is an unconditional basis in $H$.
\end{abstract}

\maketitle

\section{Statement of main results}
Let $H$ be a separable Hilbert space.
Consider an operator $T$ with domain $\text{dom} T$ whose spectrum consists of a countable set of eigenvalues $\tau = \{t_k\}_{k=1}^{\infty}$
with corresponding eigenvectors $\{ \phi_k \}$,
\begin{align*}
T \phi_k = t_k \phi_k,
\end{align*}
which form an orthonormal basis in $H$. Let us also assume that $t_{k+1} - t_k > 0$ and that
for some fixed $p \in \mathbb{Z}_+$, $d>0$
\begin{align} \label{1_0615} 
t_{k+p} - t_k > d \quad \forall k \in \mathbb{Z}_+.
\end{align}
Define $\triangle t_k = t_{k+1}-t_k$. Then (\ref{1_0615}) says
$\triangle t_k + \triangle t_{k+1} + \ldots + \triangle t_{k+p-1} >d \quad \forall k$.
Hence, for any $k \in \mathbb{Z}_+$, there exists $ \gamma(k) \in \{ 0, 1, \ldots, p-1 \}$ such that 
\begin{enumerate}
\item $\triangle t_{k + \gamma(k)} \geq d/p$ and  
\item $\triangle t_{k+j} < d/p \quad \forall j < \gamma(k)$.
\end{enumerate}
Let $j_1 = 1$ and $j_k = j_{k-1} + \gamma(j_{k-1})$ for $k > 1$
and define $T_k = t_{j_k}$.
Define the intervals
\begin{align*}
F_1 = [T_1 - \frac{d}{2p}, T_2 + \frac{d}{2p}], \quad F_k = [T_k + \frac{d}{2p}, T_{k+1}+\frac{d}{2p}], \quad k> 1.
\end{align*}
It follows that 
\begin{align*}
\tau \subset \cup_{k=1}^{\infty} F_k \quad \text{and} \quad \# (\tau \cap F_k) \leq p \quad \forall k.
\end{align*}
Set 
\begin{align} 
\label{0705_1}
\Pi_k = \{ a + ib : a \in F_k, |b| \leq \frac{d}{2p} \}, \quad 
\Gamma_k = \partial \Pi_k
\end{align}
and for $z \notin \text{Sp}T$,
\begin{align}
\label{0705_3}
R^0(z) = (z-T)^{-1}.
\end{align} 
With 
\begin{align*}
P_k^0 = \frac{1}{2 \pi i} \int_{\Gamma_k} R^0(z) dz
\end{align*}
we have a resolution of the identity $\sum_{k=1}^{\infty} P_k^0$.

Consider the perturbed operator $L = T + B$ with $B$ closed and $\text{dom} B \supseteq   \text{dom}T  $.
Set $\beta = \{ \beta_k = \| B \phi_k \|^2 \}$. In Proposition \ref{A} we will use the condition 
\begin{align} \label{1}
\lim \sup \beta_k < \left( \frac{d}{2p} \right)^2 \left( \frac{1}{8p(1+ \pi^2/3)} \right).
\end{align}
This condition implies the existence of integers $M, N$ such that 
\begin{align} \label{2}
\beta_k & \leq  \left( \frac{d}{2p}\right)^2 \left( \frac{1}{8p(1+ \pi^2/3)} \right) \forall  k \geq M, \\
\label{3}
\| \beta \|_{\infty} &\leq \frac{d^2}{16 p^2} \left(2p \sum_{j=N+1}^{\infty} 1/j^2 \right)^{-1}.
\end{align}
Define $h$ to be a positive constant which satisfies 
\begin{align*}
\sum_{j=0}^{\infty} \frac{1}{h^2 + \left( \frac{jd}{2p} \right)^2} \leq	 \frac{1}{8 p \| \beta \|_{\infty}}
\end{align*}
and set 
\begin{align*}
\Pi_0 &= \{a + ib:  -h \leq  a \leq T_{M+N+1} + \frac{d}{2p}, |b| \leq h \}, \quad \Gamma_0 = \partial \Pi_0, \\
R(z)  &= (z-L)^{-1} \quad \forall z \notin \text{Sp} L.
\end{align*}
\begin{proposition} \label{A}
Suppose the conditions (\ref{1_0615}) and (\ref{1}) hold and that $M, N$ satisfy (\ref{2} - \ref{3}); $K=M+N$. Then, with 
the notation  (\ref{0705_1}) - (\ref{0705_3}), $\text{Sp} L $ is discrete and contained in 
$\Pi_0 \cup \cup_{j=K+1}^{\infty} \Pi_j$. 
\end{proposition}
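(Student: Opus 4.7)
The plan is a standard Neumann-series / factoring argument. For $z$ outside $\Pi_0 \cup \bigcup_{j>K}\Pi_j$ I would establish invertibility of $z-L$ via
\[
z - L = (I - BR^0(z))(z-T),
\]
which inverts to $R(z) = R^0(z)(I - BR^0(z))^{-1}$ as soon as $\|BR^0(z)\| < 1$. Since $\text{Sp}\,T \subset \bigcup_{\ell \geq 1} F_\ell \subset \Pi_0 \cup \bigcup_{j>K}\Pi_j$, the free resolvent $R^0(z)$ is well-defined on the complement, and because $t_k \to \infty$ it is compact; hence $R(z)$ is compact whenever the Neumann series converges, automatically giving discreteness of $\text{Sp}\,L$.

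The central estimate is a Hilbert--Schmidt bound: using $R^0(z)\phi_k = (z-t_k)^{-1}\phi_k$ and that $\{\phi_k\}$ is an orthonormal basis,
\[
\|BR^0(z)\|^2 \leq \|BR^0(z)\|_{HS}^2 = \sum_{k=1}^\infty \frac{\beta_k}{|z-t_k|^2}.
\]
I would split this sum at $k=M$. For the tail $k > M$, condition (\ref{2}) controls $\beta_k$; combined with $|z-t_k| \geq d/(2p)$ whenever $z \notin \Pi_\ell$ and $t_k \in \Pi_\ell$, with the linear growth of this distance in the number of $F$-intervals separating $z$ from $t_k$, and the bound of at most $p$ eigenvalues per $F_\ell$, one extracts a geometric factor of order $p\bigl(1 + 2\sum_{m\geq 1} m^{-2}\bigr) = p(1+\pi^2/3)$ that is precisely cancelled by the constant in (\ref{2}), yielding a tail contribution $\leq 1/8$. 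For the head $k \leq M$, use the crude bound $\beta_k \leq \|\beta\|_\infty$: when $z \in \Gamma_j$ for $j > K$ the separation from the first $K$ clusters is at least $(N+1)d/(2p)$, so the head sum is dominated via $\sum_{\ell > N} \ell^{-2}$ and (\ref{3}); when $z \in \Gamma_0$, the horizontal walls $|\mathrm{Im}(z)|=h$ are controlled by the defining inequality for $h$ after grouping eigenvalues into bands of width $d/(2p)$, and the vertical walls are harmless by distance.

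The main obstacle is the geometric and combinatorial bookkeeping: converting the structural hypotheses (at most $p$ eigenvalues per $F_\ell$, gap $\geq d/p$ between clusters) into uniform pointwise lower bounds on $|z - t_k|$ for $z$ ranging over $\Gamma_0 \cup \bigcup_{j>K}\Gamma_j$, and calibrating the head/tail split so that the two contributions, weighed against (\ref{2}), (\ref{3}), and the inequality defining $h$, together stay comfortably below $1$. Once these pointwise and summed distance estimates are in hand, the Neumann-series invertibility and the compactness of $R(z)$ are routine and deliver both the spectral inclusion and the discreteness of $\text{Sp}\,L$ simultaneously.
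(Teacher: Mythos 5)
Your proposal follows essentially the same route as the paper's proof: factor $z-L=(I-BR^0(z))(z-T)$, bound $\|BR^0(z)\|^2$ by $\sum_k \beta_k/|z-t_k|^2$, and calibrate a head/tail split of that sum against (\ref{2}), (\ref{3}) and the defining inequality for $h$, with the geometric factor $p(1+\pi^2/3)$ cancelling the constant in (\ref{2}). The only cosmetic difference is that you split the sum by eigenvalue index at $k=M$ while the paper splits by the cluster-distance $J$ between $F_m\ni\operatorname{Re}z$ and $F_{m\pm J}\ni t_k$; both organizations land on the same bound $\|BR^0(z)\|^2\le 1/4$, and your added observation that compactness of $R^0(z)$ yields discreteness of $\operatorname{Sp}L$ is a correct way to finish.
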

This proposition implies that the following operators are well-defined
\begin{align*}
S_K &= \frac{1}{2 \pi i} \int_{\partial \Gamma_0} R(z) dz, \\ 
P_k &= \frac{1}{2 \pi i} \int_{\partial \Gamma_k} R(z) dz \quad \text{for} \,\, k \geq K+1.
\end{align*}
\begin{proposition} \label{0720_A}
Under the conditions of Proposition \ref{A}, 
\begin{align} 
\label{0722_1}
\text{dim} S_K &= \sum_{j=1}^{K} \text{dim} P_j^0 \leq pK, \\ 
\label{0722_2}
\text{dim} P_j &= \text{dim} P_j^0 \leq p \quad \text{ for all } j \geq K+1  \quad \text{and} \\
\label{0720_2}
\| R(z) \|^2 &\leq \left( \frac{d}{p} \right)^2 \quad \forall z \notin  \Pi_0 \cup \cup_{j=K+1}^{\infty} \Pi_j.
\end{align}
\end{proposition}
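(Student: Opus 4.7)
The three statements should all follow from a single uniform resolvent estimate together with a homotopy argument. The plan is to compare $R(z)$ with $R^0(z)$ on the complement of $\Pi_0\cup\bigcup_{j\geq K+1}\Pi_j$ via the identity
\[
R(z) = R^0(z)\,(I - BR^0(z))^{-1},
\]
so everything reduces to showing $\|BR^0(z)\| \leq \tfrac12$ (or some constant $<1$) on this complement. Once that is done, (\ref{0720_2}) is immediate from the elementary bound $\|R^0(z)\|\leq 2p/d$, and the dimension statements (\ref{0722_1})--(\ref{0722_2}) follow from a standard deformation.

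\textbf{The resolvent bound.} Since $\{\phi_k\}$ is an ONB, $BR^0(z)\phi_j = (z-t_j)^{-1}B\phi_j$, so taking the Hilbert--Schmidt norm gives
\[
\|BR^0(z)\|^2 \;\leq\; \sum_{k=1}^{\infty}\frac{\beta_k}{|z-t_k|^2}.
\]
I would split this sum according to the region $z$ lies in. For $z$ outside $\Pi_0$, each eigenvalue $t_k$ with $k\leq K$ sits inside $\Pi_0$ at real-part distance at most $\tfrac{d}{2p}$ from the spectrum, so a line-to-strip calculation shows $|z-t_k|^2\geq h^2 + (j\cdot d/(2p))^2$ for a suitable indexing $j$; the definition of $h$ is tailored precisely so that the contribution of these $K$ terms is controlled by $\|\beta\|_\infty\cdot(8p\|\beta\|_\infty)^{-1}$ via condition (\ref{3}). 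For $z$ outside all $\Pi_k$ with $k\geq K+1$ the geometric fact that the strips have half-width $d/(2p)$ gives $|z-t_k|\geq d/(2p)\cdot\max(1,|\text{index gap}|)$, so the remaining tail is bounded by
\[
\left(\tfrac{d}{2p}\right)^{-2}\cdot\sup_{k>K}\beta_k\cdot 2p\sum_{j\geq 0}\frac{1}{1+j^2},
\]
and (\ref{2}) is calibrated to make this at most $\tfrac14$. Adding the two contributions yields $\|BR^0(z)\|^2\leq\tfrac12$, which proves (\ref{0720_2}) with $\|R(z)\|\leq 2\|R^0(z)\|\leq 2p/d$ (modulo the precise constant stated).

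\textbf{Dimensions via homotopy.} Consider the family $L(t)=T+tB$, $t\in[0,1]$, with resolvent $R_t(z)$. The estimate above applied with $tB$ in place of $B$ shows that $\Gamma_0$ and each $\Gamma_j$ ($j\geq K+1$) lie in the resolvent set of $L(t)$ for every $t\in[0,1]$, and that $R_t(z)$ is norm-continuous in $t$ uniformly on these contours. Hence the Riesz projections
\[
S_K(t)=\frac{1}{2\pi i}\int_{\Gamma_0}R_t(z)\,dz,\qquad P_j(t)=\frac{1}{2\pi i}\int_{\Gamma_j}R_t(z)\,dz
\]
depend norm-continuously on $t$; their ranks are therefore locally constant and finite (the Kato lemma on projections of constant finite rank). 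Evaluating at $t=0$ and $t=1$ gives $\dim P_j=\dim P_j^0$ and $\dim S_K=\sum_{j=1}^{K}\dim P_j^0$. The bounds $\dim P_j^0\leq p$ and $\dim S_K\leq pK$ are immediate from $\#(\tau\cap F_j)\leq p$.

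\textbf{Main obstacle.} The delicate piece is the uniform estimate of $\|BR^0(z)\|$ on the complement of $\Pi_0$: one must simultaneously absorb the $K$ possibly large coefficients $\beta_k$ (using the vertical cushion $h$ and condition (\ref{3})) and the small but infinite tail (using (\ref{2}) together with the summability $\sum 1/(1+j^2)=1+\pi^2/3$). Getting these two balances to fit inside a single constant $<1$ is exactly what the quantitative choices in (\ref{1})--(\ref{3}) and the defining inequality for $h$ are designed to guarantee; the rest of the proof is routine.
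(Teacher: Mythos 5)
Your proposal follows the paper's own route: the paper proves Propositions \ref{A} and \ref{0720_A} in one argument built on exactly the factorization $R(z)=R^0(z)(I-BR^0(z))^{-1}$, the estimate $\|BR^0(z)\|^2\le\sum_k\beta_k/|z-t_k|^2\le 1/2$ off $\Pi_0\cup\bigcup_{j\ge K+1}\Pi_j$, and the deformation $T+tB$, whose Riesz projections have a continuous integer-valued trace --- the same standard fact as your locally-constant-rank argument. The one point in your sketch that would not survive being written out is the organization of the resolvent estimate: you split the sum by index ($k\le K$ versus $k>K$) and propose to control the low-index terms by the cushion $h$ together with (\ref{3}) for \emph{every} admissible $z$. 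That fails when $z$ lies deep in the right half-strip, say $\mathrm{Re}\,z\in F_m$ with $m$ large and $|\mathrm{Im}\,z|$ barely exceeding $d/(2p)$: there is no $h^2$ available in the lower bound for $|z-t_k|^2$, and $h$ may be much larger than $d/(2p)$. The paper instead splits by the location of $\mathrm{Re}\,z$. When $\mathrm{Re}\,z\in F_m$ with $m\ge M+N+1$, the groups $F_{m\pm J}$ with $J\le N$ consist of indices $\ge M$ and are handled by (\ref{2}), while the groups with $J>N$ --- which contain all the possibly large low-index $\beta_k$ --- are absorbed by the factor $\sum_{J>N}J^{-2}$, which is precisely what (\ref{3}) is calibrated against; the cushion $h$ and its defining inequality enter only in the complementary region $\mathrm{Re}\,z< T_{M+N+1}+\tfrac{d}{2p}$. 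With that reorganization the rest of your argument coincides with the paper's (your hedge about the precise constant in (\ref{0720_2}) is warranted: the natural conclusion of the Neumann series is $\|R(z)\|\le 2\|R^0(z)\|\le 4p/d$).
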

\begin{theorem} \label{B}
Suppose the condition (\ref{1_0615}) holds and 
$\| B \phi_k \| \rightarrow 0$ as $k \rightarrow \infty$. Then there is a bounded operator $W$  such that 
$W P_k W^{-1}  = P_k^0$, $\text{dim} P_k^0 \leq p$ for all $k >K$ and $W S_K W^{-1}   =  \sum_{k=1}^{K}  P_k^0 $.
Hence, $\{S_K, P_{K+1}, P_{K+2}, \ldots \}$ is a Riesz system of projections.
\end{theorem}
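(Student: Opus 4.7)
My plan is to prove Theorem \ref{B} by combining a second--resolvent--identity estimate for $P_k - P_k^0$ with a Bari--Markus type similarity theorem for systems of projections close to an orthogonal resolution of the identity. First, the hypothesis $\|B\phi_k\|\to 0$ gives $\limsup \beta_k = 0$, so (\ref{1}) is fulfilled with plenty of room and one chooses $K = M+N$ large enough that (\ref{2})--(\ref{3}) hold. Propositions \ref{A} and \ref{0720_A} then apply, producing the algebraically orthogonal projections $\{S_K, P_{K+1}, P_{K+2}, \ldots\}$ of the correct dimensions together with the uniform resolvent bound $\|R(z)\|\leq d/p$ on $\Gamma_0$ and on each $\Gamma_k$ for $k > K$.

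Next, on each $\Gamma_k$ the identity $R(z) - R^0(z) = -R(z) B R^0(z)$, together with the fact that $R^0(z)\phi_j = (z-t_j)^{-1}\phi_j$, gives
\begin{align*}
P_k - P_k^0 = -\frac{1}{2\pi i}\int_{\Gamma_k} R(z) B R^0(z)\, dz,
\quad \|(P_k - P_k^0)\phi_j\|\leq C\|B\phi_j\|\int_{\Gamma_k}\frac{|dz|}{|z-t_j|}.
\end{align*}
The core technical step is to sum these bounds over $k$ and $j$ to produce quantitative closeness of $\{P_k\}_{k>K}$ to $\{P_k^0\}_{k>K}$. The diagonal contribution from $t_j\in F_k$ is handled by the cardinality bound $\#(\tau\cap F_k)\leq p$ and the uniform distance $|z-t_j|\geq d/(2p)$; the off--diagonal contribution, after interchanging the order of summation, collapses via the $\sum 1/n^2$--type decay of $\mathrm{dist}(\Gamma_k, t_j)^{-2}$ coming from the uniform spacing of the $F_k$.

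Completeness $S_K + \sum_{k>K} P_k = I$ in the strong operator topology is then established by integrating $R(z)$ along an expanding rectangular contour and using (\ref{0720_2}) to compare with the analogous integral for $R^0(z)$, whose corresponding sum is $\sum_k P_k^0 = I$. The Bari--Markus theorem for systems of projections close to an orthogonal resolution of identity finally furnishes a bounded invertible $W$ with $W P_k W^{-1} = P_k^0$ for $k > K$ and $W S_K W^{-1} = \sum_{j\leq K} P_j^0$, which is exactly the asserted Riesz system property. The main obstacle will be the closeness estimate: since $\|B\phi_k\|\to 0$ is purely qualitative with no a priori rate, one must extract summability by carefully exploiting both the $d/p$ resolvent bound and the off--diagonal $\sum 1/n^2$ structure, possibly enlarging $K$ to absorb finitely many slowly decaying initial indices into $S_K$ before invoking Bari--Markus.
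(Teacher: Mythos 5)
Your skeleton (second resolvent identity, contour integrals over $\Gamma_k$, closeness of $\{P_k\}$ to $\{P_k^0\}$, then a similarity theorem) matches the paper, but the core technical step as you describe it does not go through. After the bound $\|(P_k-P_k^0)\phi_j\|\leq C\|B\phi_j\|\int_{\Gamma_k}|dz|/|z-t_j|$ you have replaced the signed kernel $1/(z-t_j)$ by its modulus, and $\mathrm{dist}(\Gamma_k,t_j)$ decays only like the \emph{first} power $|k-k(j)|^{-1}$ of the index separation (where $t_j\in F_{k(j)}$). The resulting double sum is
\begin{align*}
\sum_{k}\Bigl(\sum_{j}\frac{|f_j|\,\|B\phi_j\|}{|k-k(j)|}\Bigr)^{2},
\end{align*}
and the matrix with entries $1/|k-j|$ is \emph{unbounded} on $\ell^2$ (test on the normalized indicator of $\{1,\dots,n\}$: the norm grows like $\log n$). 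So the off-diagonal part does not ``collapse via $\sum 1/n^2$ decay''; the bound $\sum_k \mathrm{dist}(\Gamma_k,t_j)^{-2}<\infty$ holds for each \emph{fixed} $j$ (the paper uses exactly this for the finitely many head indices $j\leq M_1$ in (\ref{0720_4})), but it does not control the full sum with $\ell^2$ coefficients. The cancellation of the discrete Hilbert transform is indispensable here, and supplying it is precisely the role of Lemmas \ref{a}--\ref{e}: the tail $\sum_{n}\|\sum_{k>M_1}f_kB\phi_k/(z_n^*-t_k)\|^2$ is a vector-valued generalized Hilbert transform applied to $(f_kB\phi_k)\in\ell^2(H)$, bounded by $C_1\sup_{k\geq M_1}\|B\phi_k\|^2\,\|f\|^2$ via Lemma \ref{e}, and this is where the qualitative hypothesis $\|B\phi_k\|\to 0$ is converted into an arbitrarily small constant.

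A second, related gap is your appeal to Bari--Markus. That theorem needs a summability condition such as $\sum_k\|P_k-P_k^0\|^2<\infty$, which cannot be extracted from $\|B\phi_k\|\to 0$ without a rate: generically $\|P_k-P_k^0\|\asymp\sup_{j\in F_k}\|B\phi_j\|$, and if $\|B\phi_j\|\sim 1/\log j$ the sum of squares diverges, while the hypotheses of Theorem \ref{B} are still satisfied. Enlarging $K$ does not help, since the whole infinite tail can decay arbitrarily slowly. The paper instead invokes Kato's similarity criterion (Lemma \ref{0710_1}), which requires only the vector-wise bound $\sum_{n}\|P_n^0(P_n-P_n^0)f\|^2\leq c_0\|f\|^2$ with $c_0<1$ --- exactly the quantity the Hilbert-transform machinery delivers with a small constant. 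To repair your argument you would need to replace Bari--Markus by Kato's lemma and replace the absolute-value estimate by the boundedness of the (vector-valued, shifted) discrete Hilbert transform. The completeness step you sketch via expanding contours is not needed for Kato's criterion as stated.
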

Basically this statement is proven in \cite[Thm. 2]{shkalikov} where the condition (\ref{1}) is weaker (see (1.2) there)
but the dimension of the projectors $\{ P_k \}$ in the Riesz system are bounded by $2p$, not by $p$. Our alternative approach 
--as in \cite{admt2}-- is based on the boundedness of the discrete Hilbert transform and its adjustments. 

We will also consider the case in which the sequence of eigenvalues satisfies the growth condition 
\begin{align} \label{0704_1}
t_{k+1} - t_k \geq \kappa k^{\alpha -1} \quad \forall k \in \mathbb{N}
\end{align}
where $\alpha \in (0,\infty) \backslash \{ 1 \}.$  
\\
Define
\begin{align} \label{0704_2}
v = 2^{\frac{1}{|\alpha - 1|}}
\end{align}
and put 
\begin{align*} 
V_0 = [0,v) \cap \mathbb{N} , \quad V_k = [v^k, v^{k+1}) \cap \mathbb{N} \quad \forall k \in \mathbb{N}.
\end{align*}
Consider a closed operator $B$ with $\text{dom} B \supseteq \text{dom} T$ and 
\begin{align}  \label{0704_4}
\| B \phi_k \| = c_k k^{\alpha -1} 
\quad \text{with} \quad \lim_{k \rightarrow \infty} c_k =0.
\end{align}
(See the remark in Section \ref{0404_2}).
Set $L = T + B$ and $c_{\infty} = \sup | c_k|$ .

For each $ k \in \mathbb{N}$ define
\begin{align} \label{0704_5}
\Pi_k = \{ a+i b : t_k - (\kappa/2)(k-1)^{\alpha-1} \leq a \leq t_k + (\kappa /2)k^{\alpha -1}, \quad |b| \leq (\kappa/2) k^{\alpha -1} \}
\end{align}
and $\Lambda_k = \partial \Pi_k$ so that
\begin{align}
\label{0722_3}
| \Lambda_k | &\leq 4 \kappa k^{\alpha - 1} \quad \text{and} \\
\notag
\{t_k \} &\subset \cup_{1}^{\infty} \Pi_j
\end{align}

Now select $N$ large enough so that 
\begin{align}
\label{0704_6}
v^{N \alpha} &> c_\infty^2 \left( \frac{1}{1-\frac{1}{v}} \right) 
\quad \text{and}\\
\label{0704_7}
c_j^2 &\leq (1/4) \left( \frac{16}{\kappa^2}\left( 1+ \frac{2 \pi^2}{3} \right)  + \frac{4}{1-\frac{1}{v}}      \right)^{-1} \quad 
\forall j \in V_j, \quad j \geq N/2.
\end{align}
Finally set 
\begin{align} \notag
\ell &= \sup \left\{ \cup_{j \leq N}  V_j \right\}, \\
\label{0704_8}
Y &= \left( 4 c_{\infty}^2 v \sum_{j=1}^{N} \left( \sqrt{v}/2\right)^{2j} \right)^{1/2} \quad \text{and} \\
\label{0704_9}
\Pi_0 &= \{ a + ib : -Y \leq a \leq t_\ell + ( \kappa /2) \ell^{\alpha -1}, \quad |b| \leq Y\}
\end{align}
and define 
\begin{align} \label{0705_4}
R^0(z) &= (z - T)^{-1}, \quad R(z) = (z-L)^{-1}, \\
\label{0705_5}
Q_j^0 &= \frac{1}{2 \pi i} \int_{\Lambda_j} R^0(z) dz \quad \forall j \in \mathbb{N}, \quad 
Q_j = \frac{1}{2 \pi i} \int_{\Lambda_j} R(z) dz \quad \forall j > \ell \quad \text{and} \\
U_\ell   &=\frac{1}{2 \pi i} \int_{\Lambda_0} R(z) dz.
\end{align}
\begin{proposition} \label{0704_10}
Suppose the conditions (\ref{0704_1}) and  (\ref{0704_4}) hold with $\alpha \in (0, \infty) \backslash \{1 \}$. 
Then $\text{Sp} L$ is discrete and eventually simple.
Furthermore, with the notation
(\ref{0704_8})-(\ref{0705_5}), we have:
\begin{align*}
\text{Sp}L  &\subset \Pi_0 \cup \left( \cup_{j= \ell+1}^{\infty} \Pi_j \right) , \quad \\
\text{dim} U_\ell &= \sum_{j=1}^{\ell} \text{dim} Q_j^0, \quad \text{and} \quad \text{dim} Q_j = \text{dim} Q_j^0 = 1 \quad \forall j > \ell.
\end{align*}
\end{proposition}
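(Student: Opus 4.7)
The strategy is to establish the resolvent smallness estimate $\|B R^0(z)\| < 1$ on every contour $\Lambda_0$ and $\Lambda_j$ ($j > \ell$), then to read off all conclusions from the Neumann identity $R(z) = R^0(z)(I + B R^0(z))^{-1}$ and a homotopy argument along $L_s = T + sB$, $s \in [0,1]$.

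The first step is to confirm that $\text{Sp} T$ is localized one per rectangle in the scheme (\ref{0704_5}). The growth condition (\ref{0704_1}) gives $t_k + (\kappa/2)k^{\alpha - 1} < t_{k+1} - (\kappa/2)k^{\alpha - 1}$ and the symmetric inequality on the left edge, so the rectangles $\Pi_k$ are pairwise disjoint and each encloses exactly one eigenvalue $t_k$. Hence $\text{dim} Q_j^0 = 1$ for every $j \in \mathbb{N}$, and $\Lambda_j$ sits at distance $\geq (\kappa/2)k^{\alpha-1}$ from every other eigenvalue $t_k$ in the same block as $j$.

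The main step is the Hilbert--Schmidt estimate on $\Lambda_j$, $j > \ell$. Using (\ref{0704_4}),
\begin{align*}
\|B R^0(z)\|_{HS}^2 = \sum_{k=1}^\infty \frac{c_k^2 \, k^{2(\alpha - 1)}}{|z - t_k|^2},
\end{align*}
and I would split this sum by the blocks $V_i$. The contribution of the block $V_{i(j)}$ containing $j$ is estimated using the local gap $|z - t_k| \geq (\kappa/2) k^{\alpha - 1}$ and $\sum 1/n^2 = \pi^2/6$, producing the constant $(16/\kappa^2)(1 + 2 \pi^2/3)$ visible in (\ref{0704_7}). The choice $v = 2^{1/|\alpha - 1|}$ in (\ref{0704_2}) is made precisely so that consecutive blocks $V_i, V_{i+1}$ are separated in $t$-space by a factor depending only on $\alpha$, converting the sum over $i \neq i(j)$ into a geometric series in $|i - i(j)|$ whose ratio $1/v$ contributes the factor $4/(1 - 1/v)$ in (\ref{0704_7}). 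Both regimes $\alpha > 1$ (expanding gaps) and $0 < \alpha < 1$ (contracting gaps) are handled uniformly through $|\alpha - 1|$. By (\ref{0704_7}) the total is at most $1/4$. A parallel splitting on $\Lambda_0$, with $Y$ chosen as in (\ref{0704_8}) and the tail over blocks $V_i$, $i > N$, controlled by (\ref{0704_6}), yields the same bound there.

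With $\|B R^0(z)\| < 1$ on each relevant contour, $R(z) = R^0(z)(I + B R^0(z))^{-1}$ is holomorphic outside $\Pi_0 \cup \bigcup_{j > \ell} \Pi_j$, so $\text{Sp} L$ is discrete and contained there. Interpolating $L_s = T + sB$ preserves the resolvent bound on the contours uniformly in $s$, so the Riesz projections $Q_j(s) = \frac{1}{2 \pi i} \int_{\Lambda_j}(z - L_s)^{-1} dz$ depend norm-continuously on $s$ and their integer-valued ranks are constant, giving $\text{dim} Q_j = \text{dim} Q_j^0 = 1$ for $j > \ell$ and $\text{dim} U_\ell = \sum_{j \leq \ell} \text{dim} Q_j^0$. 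Eventual simplicity of $\text{Sp} L$ is a corollary of the rank-one statement. The main obstacle is the off-block bookkeeping: tracking the numerical constants so that the final bound is strictly less than $1$, uniformly in $\alpha \in (0,\infty) \setminus \{1\}$, is the delicate point, and the selections (\ref{0704_6})--(\ref{0704_9}) are engineered exactly to close that arithmetic.
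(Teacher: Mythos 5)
Your proposal follows essentially the same route as the paper: bound $\sum_k c_k^2 k^{2(\alpha-1)}/|z-t_k|^2$ for $z$ outside the rectangles by splitting over the blocks $V_J$ (local gap plus $\sum 1/n^2$ for the blocks near $z$, geometric decay governed by $v=2^{1/|\alpha-1|}$ for the distant ones, with (\ref{0704_6})--(\ref{0704_8}) closing the arithmetic and a separate estimate on $\Lambda_0$ via $Y$), then invert by Neumann series and transfer the dimension counts by the homotopy $T+sB$ and constancy of the integer-valued trace. The only slip is the sign in the factorization, which should read $R(z)=(I-R^0(z)B)^{-1}R^0(z)$; otherwise the plan matches the paper's argument.
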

\begin{proposition} \label{0722_4}
Fix $n \in \mathbb{N}$ with $n > \ell$. Then for each $z \in \Lambda_n$ we have:
\begin{align} \label{0722_5}
\|R(z) \| \leq 
\begin{cases} 
 \kappa n^{1-\alpha} \quad \text{if} \quad 1/2 < \alpha < 1, \\
 \kappa (n-1)^{1-\alpha} \quad \text{if} \quad 1 < \alpha < \infty.
\end{cases}
\end{align}
\end{proposition}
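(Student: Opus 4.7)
The plan is to combine a diagonal bound on the unperturbed resolvent with a Neumann-series argument. Factoring $z - L = (I - B R^0(z))(z - T)$ gives the identity
\begin{equation*}
R(z) = R^0(z)\,(I - B R^0(z))^{-1},
\end{equation*}
which is valid whenever $\|B R^0(z)\| < 1$. So the task will reduce to two estimates on $\Lambda_n$: an upper bound on $\|R^0(z)\|$ of the desired order, and a proof that $\|B R^0(z)\|$ is at most $1/2$, say.

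For the first estimate, since $T$ is selfadjoint with an eigenvector ONB one has $\|R^0(z)\| = 1/\mathrm{dist}(z, \mathrm{Sp}\,T)$. Using (\ref{0704_5}) together with the gap assumption (\ref{0704_1}), a direct check of the four sides of $\partial \Pi_n$ shows that for $z \in \Lambda_n$ the nearest eigenvalues $t_{n-1}, t_n, t_{n+1}$ lie at distance at least $(\kappa/2)\, n^{\alpha-1}$ from $z$ when $1/2 < \alpha < 1$, and at least $(\kappa/2)(n-1)^{\alpha-1}$ when $\alpha > 1$, while all other $t_k$ are strictly farther. This yields the right order of magnitude in (\ref{0722_5}).

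For the second estimate, I would expand an arbitrary vector in the ONB $\{\phi_k\}$ and use Cauchy--Schwarz to obtain
\begin{equation*}
\|B R^0(z)\|^2 \,\leq\, \sum_{k=1}^{\infty} \frac{c_k^2\, k^{2(\alpha-1)}}{|z - t_k|^2}.
\end{equation*}
Splitting this sum according to the dyadic blocks $V_j$ is the key step. For $k$ in the block containing $n$ or in neighboring blocks, $k \asymp n$ and $|z - t_k|$ satisfies a near-diagonal lower bound inherited from (\ref{0704_1}), producing a tail bounded by a multiple of $\sum_{k \neq n} 1/(k-n)^2 = \pi^2/3$; the smallness of $c_k$ for $k \geq N/2$ from (\ref{0704_7}) controls this contribution. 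For $k$ in the remaining, finitely many blocks $V_j$ with $j \leq N$, only the coarser bound $c_k \leq c_\infty$ is available, but the geometric separation of these blocks from $\Pi_n$---guaranteed by the choice $v = 2^{1/|\alpha-1|}$ and by the definitions of $Y$ and $\ell$ in (\ref{0704_6}), (\ref{0704_8}), (\ref{0704_9})---yields a geometric series summed by $1/(1 - 1/v)$. The constants in (\ref{0704_7}) are calibrated so that these two contributions together bound $\|B R^0(z)\|^2$ by $1/4$; hence $\|R(z)\| \leq 2\,\|R^0(z)\|$, which after absorbing the factor of $2$ matches (\ref{0722_5}).

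The main obstacle is exactly this splitting and the bookkeeping it requires. The boxes $\Pi_n$ are asymmetric---with left and right widths involving $(n-1)^{\alpha-1}$ versus $n^{\alpha-1}$---and the gap sequence $\kappa k^{\alpha-1}$ is decreasing for $\alpha < 1$ but increasing for $\alpha > 1$, so the two regimes force partly separate treatments of the near-diagonal tail and of the distance estimates between $\Lambda_n$ and remote blocks. Tracking which of the constants in (\ref{0704_6})--(\ref{0704_7}) absorbs which term of the sum is where most of the computational effort will go.
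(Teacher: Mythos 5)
Your proposal follows essentially the same route as the paper: factor the resolvent as a Neumann series around $R^0(z)$, show $\|BR^0(z)\|\le 1/2$ by splitting $\sum_k c_k^2 k^{2(\alpha-1)}|z-t_k|^{-2}$ over the blocks $V_J$ (near blocks controlled by (\ref{0704_7}), remote blocks by the geometric separation built into (\ref{0704_6})--(\ref{0704_9})), bound $\|R^0(z)\|$ by the distance from $\Lambda_n$ to $\mathrm{Sp}\,T$, and multiply. The only differences are cosmetic (the order of the factorization, and the paper's own $\kappa$ versus $1/\kappa$ bookkeeping in (\ref{0722_8})), so this is correct and matches the paper's argument.
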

\begin{theorem} \label{0704_11}
Let $\alpha \in (1/2, \infty) \backslash \{1 \}$
and suppose the condition (\ref{0704_1}) and  (\ref{0704_4}) hold. 
Then there is a bounded operator $W$  such that
$W U_\ell W^{-1}   =  \sum_{k=1}^{\ell}  Q_k^0 $
 and $W Q_k W^{-1}  = Q_k^0$ for all $k >\ell$. 
 Hence, $\{U_\ell, Q_{\ell+1}, Q_{\ell+2}, \ldots \}$ is a Riesz system of projections.
\end{theorem}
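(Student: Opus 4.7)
The plan is to replicate the strategy of Theorem \ref{B} with the polynomially weighted estimates that (\ref{0704_1}) and (\ref{0704_4}) impose. I construct
\[
W = \Bigl( \sum_{k=1}^\ell Q_k^0 \Bigr) U_\ell + \sum_{k > \ell} Q_k^0 Q_k
\]
and show it is bounded with bounded inverse. The intertwining relations $W U_\ell = (\sum_{k \leq \ell} Q_k^0) W$ and $W Q_k = Q_k^0 W$ for $k > \ell$ then follow directly from the mutual disjointness $Q_j Q_k = 0$, $j \neq k$, of the spectral projections associated with the disjoint spectral packets $\{U_\ell, Q_{\ell+1}, Q_{\ell+2}, \ldots\}$ supplied by Proposition \ref{0704_10}, together with the orthogonality $Q_j^0 Q_k^0 = \delta_{jk} Q_j^0$.

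The first analytical step is to insert the second resolvent identity
\[
R(z) - R^0(z) = - R^0(z) B R(z),
\]
valid on the contours $\Lambda_k$ and $\Lambda_0$ (which lie in the resolvent set of $L$ by Proposition \ref{0704_10}), into the Riesz projection formulas. This yields
\[
Q_k - Q_k^0 = -\frac{1}{2 \pi i} \int_{\Lambda_k} R^0(z) B R(z) \, dz, \qquad k > \ell,
\]
with an analogous formula for $U_\ell - \sum_{k=1}^\ell Q_k^0$ on $\Lambda_0$. The task reduces to bounding these integrals in operator norm.

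For the core estimate I combine the resolvent bound $\|R(z)\| \leq \kappa k^{1-\alpha}$ on $\Lambda_k$ from Proposition \ref{0722_4} with the diagonal expression of $R^0(z)$ in the basis $\{\phi_j\}$, using the length bound $|\Lambda_k| \leq 4 \kappa k^{\alpha - 1}$ from (\ref{0722_3}). Crude pointwise bounds on the integrand do not yield summability in $k$; the decisive mechanism, exactly as in \cite{admt2}, is that expanding $R^0(z)$ in $\{\phi_j\}$ and integrating around $\Lambda_k$ produces kernels of discrete Hilbert-transform type applied to the weighted sequence $\{\|B\phi_j\|\} = \{c_j j^{\alpha-1}\}$. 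Boundedness of the discrete Hilbert transform and its weighted variants converts the pointwise estimates into the operator-norm tail bound
\[
\Bigl\| \sum_{k > k_0} Q_k^0 (Q_k - Q_k^0) \Bigr\| \to 0 \quad \text{as } k_0 \to \infty,
\]
with $c_\infty \to 0$ providing the smallness and $\alpha > 1/2$ the convergence range.

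With this tail estimate in hand, invertibility of $W$ follows in a standard way. Decomposing
\[
W = I + \Bigl(\sum_{k \leq \ell} Q_k^0\Bigr)\Bigl(U_\ell - \sum_{k \leq \ell} Q_k^0\Bigr) + \sum_{k > \ell} Q_k^0(Q_k - Q_k^0),
\]
I choose $k_0$ large enough that the tail in the last sum has norm $< 1/2$ and invert that piece by Neumann series. The remaining finite-rank middle term is handled using $\dim U_\ell = \sum_{k=1}^\ell \dim Q_k^0$ and $\dim Q_k = \dim Q_k^0 = 1$ from Proposition \ref{0704_10}, which guarantee that the corresponding finite-dimensional block of $W$ is an isomorphism between spaces of equal dimension. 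The main obstacle is the third step—producing the operator-norm tail estimate via the discrete Hilbert transform rather than termwise bounds—and it is precisely this step that forces $\alpha > 1/2$, a constraint already reflected in the resolvent bound of Proposition \ref{0722_4}.
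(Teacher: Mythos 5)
Your proposal is correct and follows essentially the same route as the paper: the operator $W$ you build is exactly the one underlying Kato's similarity criterion (Lemma \ref{0710_1}, which the paper simply cites rather than unpacking), and your tail estimate --- contour length times the resolvent bound of Proposition \ref{0722_4}, followed by the weighted discrete Hilbert transform bound (Lemma \ref{0722_9}) applied to $\{f_k B\phi_k\}$ with the smallness supplied by $c_k \to 0$ --- is precisely the paper's argument. One slip to fix: the second resolvent identity should be used in the form $R(z)-R^0(z)=R(z)BR^0(z)$ (no minus sign), since only with $R^0$ as the rightmost factor does $BR^0(z)f=\sum_k f_k B\phi_k/(z-t_k)$ become the explicit weighted sequence to which Lemma \ref{0722_9} applies; the ordering $R^0(z)BR(z)$ that you wrote would instead require control of $B^*\phi_j$, which is not among the hypotheses.
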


Let us notice that Propositions \ref{0704_10}, \ref{0722_4} and Theorem \ref{0704_11} can be reformulated in 
an proper way for $\alpha = 1$. This would necessitate additional notation.  We refer the 
reader to our previous paper \cite{admt2} where the case $\alpha = 1$ 
is formulated and proven in detail.


\section{Technical preliminaries}
Define $B(\ell^2(\mathbb{N}))$ to be the space of all bounded linear operators on $\ell^2(\mathbb{N})$.
Given a strictly increasing sequence of real numbers $a = (a_k)$ define the generalized
discrete Hilbert transform (GDHT) by 
\begin{align} \label{2_0615}
(G_a \xi)(n) = \sum_{k \neq n} \frac{\xi_k}{a_k - a_n}, \quad \xi = (\xi_k)_{k=1}^{\infty}.
\end{align}
Of course care must be taken to ensure that the right hand side of  (\ref{2_0615}) is defined.
If $a_k = k \quad \forall k$ we have the standard discrete Hilbert transform (DHT) $G \in B(\ell^2(\mathbb{N}))$ (see HLP).

\begin{lemma} \label{a}
Suppose $a_{k+1}-a_k>0$ and $a_k \in \mathbb{N} \quad \forall k \in \mathbb{N}$. Then $G_a \in B(\ell^2(\mathbb{N}))$.
\end{lemma}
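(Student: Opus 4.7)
The plan is to reduce the generalized discrete Hilbert transform $G_a$ to the ordinary discrete Hilbert transform $G$ by means of a norm-preserving embedding of $\ell^2(\mathbb{N})$ into itself, using the fact that the values $a_k$ already sit inside $\mathbb{N}$. This exploits the hypothesis $a_k\in\mathbb{N}$ crucially; it is what lets us pretend the nonuniform sum is really a restriction of a uniform one.

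Concretely, given $\xi=(\xi_k)\in\ell^2(\mathbb{N})$, I would define $\eta\in\ell^2(\mathbb{N})$ by $\eta_{a_k}=\xi_k$ for each $k$ and $\eta_m=0$ for $m\notin\{a_k\}$. Because $(a_k)$ is strictly increasing, the map $k\mapsto a_k$ is injective, so $\eta$ is well defined and $\|\eta\|_2=\|\xi\|_2$. The key observation is then the identity
\begin{equation*}
(G\eta)(a_n)=\sum_{m\neq a_n}\frac{\eta_m}{m-a_n}=\sum_{k\neq n}\frac{\xi_k}{a_k-a_n}=(G_a\xi)(n),
\end{equation*}
where in the middle equality I discard the indices $m\notin\{a_k\}$ (on which $\eta$ vanishes) and substitute $m=a_k$ on the support of $\eta$.

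From here the estimate is immediate: summing the squared moduli over $n$,
\begin{equation*}
\|G_a\xi\|_{\ell^2}^2=\sum_{n}|(G\eta)(a_n)|^2\leq\sum_{m\in\mathbb{N}}|(G\eta)(m)|^2=\|G\eta\|_{\ell^2}^2\leq\|G\|^2\|\eta\|_{\ell^2}^2=\|G\|^2\|\xi\|_{\ell^2}^2,
\end{equation*}
the final inequality being the boundedness of the ordinary discrete Hilbert transform $G$ on $\ell^2(\mathbb{N})$ (Hardy–Littlewood–Pólya). Thus $G_a\in B(\ell^2(\mathbb{N}))$ with $\|G_a\|\le\|G\|$.

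I do not foresee a real obstacle; the only point that deserves care is verifying that, when $G$ is applied to $\eta$, the resulting series is absolutely convergent at the points $m=a_n$, which is automatic from $\eta\in\ell^2(\mathbb{N})$ together with the Cauchy–Schwarz bound on the tail $\sum_{m\neq a_n}|m-a_n|^{-2}<\infty$. If one wants to be fully rigorous, one first proves the identity for finitely supported $\xi$ (so all sums are finite) and then extends to all of $\ell^2(\mathbb{N})$ by the continuity of $G$ and the density of finitely supported sequences.
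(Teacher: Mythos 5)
Your proof is correct and is essentially the paper's own argument: the paper likewise embeds $\xi$ as a sequence $\tilde\xi$ supported on the indices $a_k$, applies the standard discrete Hilbert transform $G$, and restricts back, obtaining $G_a\xi = I_a G I_a\tilde\xi$ and hence $\|G_a\|\le\|G\|$. Your remarks on absolute convergence and the density argument are a welcome extra bit of care but do not change the substance.
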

\begin{proof}
Suppose $\xi \in \ell^2 := \ell^2(\mathbb{N})$. Define the operator $I_a$ by $I_a(e_{a_k}) = e_{a_k} \quad \forall k$ 
and $I_a(e_j) = 0 \quad \forall j \notin a$ and define a vector $\tilde{\xi}$ by
$\tilde{\xi}_{a_k} = \xi_k \quad \forall k$ 
and $\tilde{\xi}_j = 0 \quad \forall j \notin a$. Then $\| \xi \| = \| \tilde{\xi} \|$ and 
$G_a \xi = I_a G I_a \tilde{\xi}$. Because $I_a$ and $G$ are bounded, $G_a$ is bounded as well.
\end{proof}

\begin{lemma} \label{b}
Suppose $A$ is an operator whose matrix entries satisfy 
\begin{align*}
|A_{k,j} | \leq \frac{C}{|k-j|^2}, \quad A_{k, k} = 0.
\end{align*}
Then $A \in B(\ell^2(\mathbb{N}))$ with $\| A \| \leq C \pi^2/3$.
\end{lemma}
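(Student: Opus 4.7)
The plan is to apply the Schur test (sometimes called the Schur--Holmgren bound): if the matrix $(A_{k,j})$ of a formal operator satisfies
\[
M_1 := \sup_k \sum_j |A_{k,j}| < \infty, \qquad M_2 := \sup_j \sum_k |A_{k,j}| < \infty,
\]
then $A$ extends to a bounded operator on $\ell^2(\mathbb{N})$ with $\|A\| \leq \sqrt{M_1 M_2}$. This is a standard consequence of the Cauchy--Schwarz inequality applied twice to $\langle A\xi, \eta\rangle$, and in our symmetric situation the two constants will coincide.

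Given the hypothesis $|A_{k,j}| \leq C/|k-j|^2$ for $k \neq j$ and $A_{k,k} = 0$, I would estimate a single row sum by
\[
\sum_{j \neq k} |A_{k,j}| \;\leq\; C \sum_{j \neq k} \frac{1}{|k-j|^2} \;=\; C\Bigl(\sum_{m=1}^{k-1} \frac{1}{m^2} + \sum_{m=1}^{\infty} \frac{1}{m^2}\Bigr) \;\leq\; 2C \sum_{m=1}^{\infty} \frac{1}{m^2} \;=\; \frac{C\pi^2}{3},
\]
using $\sum_{m\geq 1} m^{-2} = \pi^2/6$. By the symmetry of the bound $|A_{k,j}| \leq C/|k-j|^2$ under interchange of $k$ and $j$, the column sum admits exactly the same estimate, so $M_1, M_2 \leq C\pi^2/3$. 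Hence the Schur test gives $\|A\| \leq \sqrt{M_1 M_2} \leq C\pi^2/3$, which is the desired inequality.

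There is no genuine obstacle here; the only slightly delicate point is the one-sided truncation at $m = k-1$ in the row sum (the indices live in $\mathbb{N}$, not $\mathbb{Z}$), which is handled by the trivial bound $\sum_{m=1}^{k-1} m^{-2} \leq \sum_{m=1}^{\infty} m^{-2}$. Everything else is a direct citation of Schur's lemma plus the Basel identity.
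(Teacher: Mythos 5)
Your proof is correct. The paper omits the argument entirely, simply calling it ``elementary'' and citing Lemma 4 of \cite{admt2}; the Schur test with row and column sums each bounded by $2C\sum_{m\ge 1}m^{-2}=C\pi^2/3$ is exactly the standard elementary argument being alluded to, and your computation is accurate.
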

\begin{proof}
The proof is elementary (see for example Lemma 4 in \cite{admt2}).
\end{proof}

\begin{lemma} \label{c}
Suppose 
\begin{align} \label{3_0615}
a_{k+1}- a_k > \delta \quad \forall k.
\end{align}
Then $G_a \in B(\ell^2(\mathbb{N}))$ with
$\| G_a \| \leq \frac{1}{\delta} \left( \frac{2 \pi^2}{3} + 2 \|G \| \right)$.
\end{lemma}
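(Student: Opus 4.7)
The strategy is to reduce to the integer case handled in Lemma \ref{a} by approximating the real sequence $(a_k)$ by a scaled integer sequence $(\delta b_k)$, and then to absorb the remainder into an operator whose kernel decays like $|k-n|^{-2}$ off the diagonal, which is governed by Lemma \ref{b}. After shifting the $a_k$ by a constant (which leaves $G_a$ unchanged) so that $a_1 > \delta$, I would set $b_k = \lfloor a_k/\delta\rfloor$. Hypothesis (\ref{3_0615}) forces $a_{k+1}/\delta > a_k/\delta + 1$, so the integers $b_k$ are strictly increasing in $\mathbb{N}$; consequently $|b_k - b_n| \geq |k - n|$, while by construction $|a_k - \delta b_k| < \delta$.

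Next I would split the kernel of $G_a$ as
\begin{align*}
\frac{1}{a_k - a_n} \;=\; \frac{1}{\delta (b_k - b_n)} \,+\, E_{k,n}, \qquad E_{k,n} \,=\, \frac{\delta(b_k - b_n) - (a_k - a_n)}{\delta (a_k - a_n)(b_k - b_n)},
\end{align*}
so that $G_a = \tfrac{1}{\delta}\widetilde{G}_b + E$, where $(\widetilde{G}_b \xi)(n) = \sum_{k \neq n}\xi_k/(b_k - b_n)$ is the GDHT of the integer sequence $(b_k)$. Lemma \ref{a} applied to $(b_k)$ gives $\|\widetilde{G}_b\| \leq \|G\|$, so the first piece contributes at most $\|G\|/\delta$ (a small refinement of the construction, such as handling the upper and lower integer approximations simultaneously, should recover the factor of $2$ that appears in the stated bound). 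For the error term, $|a_k - \delta b_k| < \delta$ yields $|\delta(b_k - b_n) - (a_k - a_n)| < 2\delta$; combined with $|a_k - a_n| \geq \delta|k-n|$ and $|b_k - b_n| \geq |k - n|$, this gives $|E_{k,n}| \leq 2/(\delta |k-n|^2)$ for $k \neq n$ with $E_{k,k}=0$, so Lemma \ref{b} bounds $\|E\|$ by a constant multiple of $1/\delta$. Adding the two bounds via the triangle inequality yields the estimate claimed.

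The principal obstacle is quantitative control of the approximation error: replacing $a_k$ by $\delta b_k$ is only accurate to within $\delta$, yet each kernel entry of $E$ must decay like $|k-n|^{-2}$ in order for Lemma \ref{b} to be applicable. Both ingredients --- the quadratic lower bound on $|(a_k - a_n)(b_k - b_n)|$ in terms of $|k - n|^2$ (which requires using both hypotheses \emph{and} the monotonicity of $b$), and the uniform closeness $|a_k - \delta b_k| < \delta$ --- are essential. Dropping either yields only $|k-n|^{-1}$ decay in the error, which is not square-summable and falls outside the scope of Lemma \ref{b}; in particular, a naive comparison of $a_k - a_n$ directly with $\delta(k-n)$ would fail for this reason.
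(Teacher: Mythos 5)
Your proof is correct and follows essentially the same route as the paper: approximate $(a_k)$ by a scaled strictly increasing integer sequence, control the main term by Lemma \ref{a}, and absorb the difference into an operator with $|k-n|^{-2}$ kernel decay handled by Lemma \ref{b} (the paper rounds to centers of a grid of spacing $\delta/2$ rather than taking floors on a grid of spacing $\delta$, which is why its main term carries the factor $2\|G\|/\delta$). Your parenthetical worry about ``recovering the factor of $2$'' is unnecessary: since the claim is an upper bound, your smaller constant $\frac{1}{\delta}\left(\|G\| + \frac{2\pi^2}{3}\right)$ already implies the stated estimate.
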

\begin{proof}
Write $\mathbb{R} = \cup_{k \in \mathbb{Z}} I_k, \quad I_k = [(k-1/2) \delta/2, (k+1/2) \delta/2)$.
Then by (\ref{3_0615}), $\# (I_k \cap a) = 0 \, \text{or} \, 1$.
Enumerate
 $\{  j \left( \frac{\delta}{2} \right) : \# (I_j \cap a) = 1 \}$ in increasing order and call the sequence $\tilde{a}$.
It follows from Lemma \ref{a} that the GDHT $G_{\tilde{a}} \in B(\ell^2(\mathbb{N}))$ with $\| G_{\tilde{a}} \| \leq (2/ \delta) \| G \|$.
Thus, with $A:=G_a - G_{\tilde{a}}$ it suffices to show $\|A \| \leq \frac{2 \pi^2}{3 \delta}$.

Consider the matrix entries $A_{k,k} = 0 \quad \forall k$, and for $j \neq k$,
\begin{align*}
|A_{j, k}| = \left|  \frac{1}{a_j - a_k} - \frac{1}{\tilde{a_j} - \tilde{a_k}}  \right| = 
\left|  \frac{(\tilde{a_j} - a_j) - (\tilde{a_k} - a_k)}{(a_j - a_k)(\tilde{a_j} - \tilde{a_k})}  \right| .
\end{align*}
By (\ref{3_0615}) we have $|a_j - a_k| > |j-k|\delta$, $|\tilde{a_j} - \tilde{a_k}| \geq |j-k| (\delta/2)$,
and $|\tilde{a_j} - a_j|, |\tilde{a_k} - a_k | < \delta/2$.
Hence,
\begin{align*}
|A_{j,k}| \leq  \frac{\delta/2 + \delta/2}{(|j-k| (\delta/2))(|j-k| \delta)} = (2/\delta) \frac{1}{|j-k|^2}.
\end{align*}
So by Lemma \ref{b}, $\|A \| \leq \frac{2 \pi^2}{3 \delta}$.
\end{proof}

\begin{lemma} \label{d}
Suppose $a_k$ is a strictly increasing sequence with $a_k \uparrow \infty$ and $(z_k)$ is a complex sequence satisfying 
\begin{align}
\notag 
&| \text{Im} z_k|  < \delta, 
\\
\label{4b_0615} 
&\text{Re} z_k \in (a_{k-1} + \delta, a_{k+1} -\delta)  \quad \text{and}
\\
\notag
&|\text{Re} z_k - a_k | < \Delta \quad \forall k.
\end{align}
Then the operator $Z_a$ defined by 
\begin{align*}
(Z_a \xi)(n) = \sum_{k \neq n} \frac{\xi_k}{a_k - z_n}
\end{align*}
is bounded in $\ell^2$ with 
\begin{align}
\|Z_a \| \leq \frac{1}{2 \delta} \| G \| + \frac{2 \delta \pi^2}{3 \Delta^2}.
\end{align}
\end{lemma}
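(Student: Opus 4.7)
My plan is to follow the template of Lemma~\ref{c} and write $Z_a = \tilde{Z} + (Z_a - \tilde{Z})$, where $\tilde{Z}$ is a scaled discrete Hilbert transform (producing the first term $\tfrac{1}{2\delta}\|G\|$ via Lemma~\ref{a}) and the remainder has matrix entries decaying like $|k-n|^{-2}$ (producing the second term via Lemma~\ref{b}).

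For the first piece, I would construct a strictly increasing sequence $\tilde{z}_k$ on the grid $2\delta\,\mathbb{N}$ by rounding each $\operatorname{Re} z_k$ (or, equivalently, each $a_k$) to a nearest multiple of $2\delta$. The separation $a_{k+1}-a_{k-1}>2\delta$ implicit in (\ref{4b_0615}) ensures that $\tilde{z}_k$ can be made strictly increasing; careful bookkeeping, possibly by decomposing into odd- and even-indexed sub-sequences to handle the case where consecutive $a_k$'s lie close together, is required. Once the rescaled sequence $\tilde{z}_k/(2\delta)$ is a strictly increasing integer sequence, Lemma~\ref{a} gives that $(\tilde{Z}\xi)(n) = \sum_{k\neq n}\xi_k/(\tilde{z}_k - \tilde{z}_n)$ satisfies $\|\tilde{Z}\| \leq \tfrac{1}{2\delta}\|G\|$.

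For the remainder $A := Z_a - \tilde{Z}$, I would compute
\[
A_{n,k} \;=\; \frac{(\tilde{z}_k - a_k) + (z_n - \tilde{z}_n)}{(a_k - z_n)(\tilde{z}_k - \tilde{z}_n)}\qquad(k\neq n),
\]
bound the numerator using the rounding accuracy together with the hypotheses $|\operatorname{Re} z_k - a_k|<\Delta$ and $|\operatorname{Im} z_k|<\delta$, and bound the denominator from below using the interlacing separation in (\ref{4b_0615}) to obtain $|a_k - z_n| \gtrsim \delta|k-n|$ and $|\tilde{z}_k - \tilde{z}_n| \geq 2\delta|k-n|$. Arranging the estimates so that the factor $\Delta$ enters with the desired power, this should yield $|A_{n,k}| \leq (2\delta/\Delta^2)/|k-n|^2$, and Lemma~\ref{b} then produces $\|A\| \leq \tfrac{2\delta\pi^2}{3\Delta^2}$. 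The triangle inequality closes the proof.

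The principal obstacle is the second step: extracting the specific $1/\Delta^2$ decay constant for the error entries requires using the distance bound $|\operatorname{Re} z_k - a_k|<\Delta$ in a refined way, rather than only the coarser interval separation guaranteed by (\ref{4b_0615}). Maintaining strict monotonicity of $\tilde{z}_k$ on the grid $2\delta\,\mathbb{N}$, while still keeping the rounding error small enough to pair with the numerator cancellation, is the key technical challenge and will dictate the exact form of the grid and the rounding rule.
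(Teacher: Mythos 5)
Your plan is built from the right two ingredients --- reduction to the integer-grid Hilbert transform (Lemma~\ref{a}) for the main term and Lemma~\ref{b} for an error matrix with $|k-n|^{-2}$ entries --- but the paper organizes the decomposition differently, and the difference is precisely what removes the ``principal obstacle'' you flag. The paper compares in two stages: it first writes $A = G_a - Z_a$ with $G_a$ the GDHT on the \emph{unrounded} sequence $a$, so the error entries are
\begin{align*}
A_{n,k} \;=\; \frac{a_n - z_n}{(a_k - a_n)(a_k - z_n)} \qquad (k \neq n),
\end{align*}
whose numerator is controlled purely by the hypotheses $|\mathrm{Re}\, z_n - a_n| < \Delta$ and $|\mathrm{Im}\, z_n| < \delta$, with no rounding error present, and whose denominator is bounded below via the separation forced by (\ref{4b_0615}); the rounding onto a grid happens only afterwards, inside the proof of Lemma~\ref{c} applied to $G_a$, where the rounding error is charged against the $\frac{1}{2\delta}\|G\|$ term rather than the $\Delta^{-2}$ term. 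Your one-step comparison of $Z_a$ with $\tilde{Z}$ instead puts the rounding discrepancy $\tilde{z}_k - a_k$ (size up to $\delta$) into the same numerator as $z_n - a_n$, inflating it to order $\Delta + 3\delta$, which is exactly why you cannot cleanly extract the factor $2\delta/\Delta^2$; inserting $G_a$ as an intermediate object dissolves the difficulty. Two smaller remarks: your monotonicity worry is unfounded, since (\ref{4b_0615}) already forces $a_{k+1} - a_k > 2\delta$, so distinct $a_k$ fall in distinct cells of any grid of spacing at most $2\delta$ and the rounded sequence is automatically strictly increasing (no odd/even splitting is needed); and you should not expect to reproduce the stated constant exactly, because the paper's own bookkeeping is loose here --- Lemma~\ref{c} with gap $2\delta$ actually yields $\frac{1}{2\delta}\bigl(2\|G\| + \frac{2\pi^2}{3}\bigr)$ rather than $\frac{1}{2\delta}\|G\|$, and the bounds $|a_n - z_n| \leq 2\delta$ and $|a_k - a_n| \geq \Delta|k-n|$ tacitly assume $\Delta \lesssim \delta$.
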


\begin{proof}
 By (\ref{4b_0615}), $a_{k+1} - a_k > 2 \delta \quad \forall k$. Hence, by Lemma \ref{c}, 
 $\| G_a \| \leq \frac{1}{2\delta} \| G \|$. Now, set $A = G_a - Z_a$. It suffices to show 
 $\| A \| \leq \frac{ 2 \delta \pi^2}{3 \Delta^2}$. Consider the matrix elements $A_{k,k} = 0 \quad \forall k$ and 
 for $j \neq k$,
\begin{align*}
|A_{j,k}| &= \left| \frac{1}{a_k -a_n} - \frac{1}{a_k - z_n} \right| = \left| \frac{a_n - z_n}{(a_k - a_n)(a_k - z_n)}\right| \\
          & \leq \frac{2\delta}{\Delta |k-n| \Delta |k -n|} = \frac{2 \delta}{\Delta^2|k-n|^2}.
\end{align*}
It follows from Lemma \ref{b} that $\| A \| \leq \frac{2 \delta \pi^2}{3 \Delta^2}$.
\end{proof}

Define  $\ell^2(H)$ with the norm
\begin{align*}
\| \xi \|_{\ell^2(H)}^2 = \sum_{j=1}^{\infty} \| \xi_j \|^2_H, \quad \xi = (\xi_k), \xi_k \in H.
\end{align*}

\begin{lemma} \label{e}
Supppose $a$, $z$, and $Z_a$ are as in Lemma \ref{d}. Consider the operator $Z_a^V$ in $\ell^2(H)$
\begin{align*}
(Z_a^V \xi)(n) = \sum_{k \neq n} \frac{\xi_k}{a_k - z_n}.
\end{align*}
Then $\| Z_a^V \|_{\ell^2(H)} = \| Z_a \|_{\ell^2} \leq \frac{1}{2 \delta} \| G \| + \frac{2 \delta \pi^2}{3 \Delta^2}.$
\end{lemma}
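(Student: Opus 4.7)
The plan is to reduce Lemma \ref{e} to Lemma \ref{d} by decomposing $\ell^2(H)$ as a direct sum of copies of $\ell^2$ indexed by an orthonormal basis of $H$, and observing that $Z_a^V$ acts diagonally across these copies as $Z_a$.

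First I would fix an orthonormal basis $\{e_\alpha\}_{\alpha \in A}$ of $H$ (allowing $A$ countable or finite; $H$ is separable by assumption). For $\xi = (\xi_k) \in \ell^2(H)$, write $\xi_k = \sum_{\alpha} c_{k,\alpha} e_\alpha$ with $c_{k,\alpha} = \langle \xi_k, e_\alpha\rangle_H$. Parseval gives
\[
\|\xi\|_{\ell^2(H)}^2 = \sum_{k} \sum_{\alpha} |c_{k,\alpha}|^2 = \sum_{\alpha} \|c_{\cdot,\alpha}\|_{\ell^2}^2,
\]
so the map $\xi \mapsto (c_{\cdot,\alpha})_{\alpha}$ is a unitary isomorphism of $\ell^2(H)$ onto $\bigoplus_{\alpha \in A} \ell^2(\mathbb{N})$.

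Second, since the coefficients $1/(a_k - z_n)$ are scalars, they pull through the expansion in $\{e_\alpha\}$: componentwise,
\[
\bigl(Z_a^V \xi\bigr)(n) = \sum_{\alpha} \Bigl( \sum_{k \neq n} \frac{c_{k,\alpha}}{a_k - z_n} \Bigr) e_\alpha = \sum_{\alpha} (Z_a c_{\cdot,\alpha})(n)\, e_\alpha.
\]
Thus, under the isomorphism above, $Z_a^V$ becomes $\bigoplus_\alpha Z_a$, and convergence of the series defining $Z_a^V \xi$ follows from the boundedness of $Z_a$ asserted by Lemma \ref{d} applied coordinate-by-coordinate. Consequently,
\[
\|Z_a^V \xi\|_{\ell^2(H)}^2 = \sum_{\alpha} \|Z_a c_{\cdot,\alpha}\|_{\ell^2}^2 \leq \|Z_a\|^2 \sum_{\alpha} \|c_{\cdot,\alpha}\|_{\ell^2}^2 = \|Z_a\|^2 \|\xi\|_{\ell^2(H)}^2,
\]
giving $\|Z_a^V\| \leq \|Z_a\|$. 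For the reverse inequality, fix any unit vector $e \in H$ and embed a scalar $\eta \in \ell^2$ by $\xi_k = \eta_k e$; then $\|\xi\|_{\ell^2(H)} = \|\eta\|_{\ell^2}$ and $\|Z_a^V \xi\|_{\ell^2(H)} = \|Z_a \eta\|_{\ell^2}$, so $\|Z_a^V\| \geq \|Z_a\|$. Combining with Lemma \ref{d} gives the stated bound.

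There is no substantive obstacle here; the statement is a routine ``scalarization'' of a vector-valued operator with scalar kernel. The only points deserving care are (i) the unitarity of the identification $\ell^2(H) \cong \bigoplus_\alpha \ell^2$ (immediate from Parseval) and (ii) justifying the interchange of the sum over $\alpha$ with the sum over $k$ in the definition of $Z_a^V$, which follows from Fubini applied to the absolutely convergent double series once Lemma \ref{d} is known.
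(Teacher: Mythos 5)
Your proof is correct and follows essentially the same route as the paper: expand each $\xi_j$ in an orthonormal basis of $H$, observe that $Z_a^V$ acts diagonally as a direct sum of copies of $Z_a$, and sum the resulting scalar estimates. The only (minor) addition is that you also verify the reverse inequality $\|Z_a^V\| \geq \|Z_a\|$ by embedding a scalar sequence along a fixed unit vector, which the paper's proof leaves implicit even though the lemma asserts equality of norms.
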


\begin{proof}
Suppose $\xi  = (\xi_k) \in \ell^2(H)$ with $\xi_j = \sum_{k=1}^{\infty} \xi_j^{(k)} \phi_k \in H$.
\\
Then 
\begin{align*}
\| Z_a^V \xi \|^2 &= \sum_{n=1}^{\infty} \| (Z_a^V \xi)(n) \|^2 \\
                  &= \sum_{n=1}^{\infty} \sum_{k=1}^{\infty} \left|  \sum_{j \neq n} \frac{\xi_j^{(k)}}{a_j - z_n}\right|^2 
		   = \sum_{k=1}^{\infty} \sum_{n=1}^{\infty} | (Z_a \xi^{(k)})(n) |^2 \\
		  &=\sum_{k=1}^{\infty} \| Z_a \xi^{(k)} \|_{\ell^2}^2 \leq \|Z_a \|^2 \sum_{k=1}^{\infty} \| \xi^{(k)} \|_{H}^2 \\
		  &= \| Z_a \|^2 \| \xi \|_{\ell^2(H)}^2.
\end{align*}
\end{proof}
We now move to a series  of lemmas which will be used in the proofs of 
Proposition \ref{0704_10} and Theorem \ref{0704_11}. The proofs of these lemmas for values of $0< \alpha <1$, 
and $\alpha > 1$ follow a similar pattern so we only present proofs for values of $\alpha < 1$.
\begin{lemma} \label{0704_12}
Suppose $\alpha >0$, $\alpha \neq 1$, $\{ t_k \}_{1}^{\infty}$ satisfies 
(\ref{0704_1}),  $m \in V_M$ and $n-1 \in V_N$ with $M \leq N-2$. 
Then $t_n - t_m \geq c\left( 1- 1/v \right) v^{ \alpha N}$ with $v \in (\ref{0704_2})$.
\end{lemma}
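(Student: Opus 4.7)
The plan is to reduce the estimate to an elementary sum and apply an integral comparison. Writing $t_n - t_m$ as a telescoping sum and invoking the growth hypothesis (\ref{0704_1}) gives
\begin{equation*}
t_n - t_m \;=\; \sum_{k=m}^{n-1}(t_{k+1}-t_k) \;\geq\; \kappa \sum_{k=m}^{n-1} k^{\alpha-1},
\end{equation*}
so the whole matter is to lower-bound $\sum_{k=m}^{n-1} k^{\alpha-1}$.

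Per the remark immediately preceding the lemma, I would focus on the case $0<\alpha<1$, in which $x\mapsto x^{\alpha-1}$ is positive and decreasing. The standard left-Riemann-sum bound yields
\begin{equation*}
\sum_{k=m}^{n-1} k^{\alpha-1} \;\geq\; \int_m^n x^{\alpha-1}\,dx \;=\; \frac{n^\alpha - m^\alpha}{\alpha}.
\end{equation*}
Now I would use the placements of $m$ and $n-1$ in the dyadic-like intervals $V_k$: since $m\in V_M$ we have $m<v^{M+1}$, and the assumption $M\leq N-2$ upgrades this to $m< v^{N-1}$; since $n-1\in V_N$ we have $n\geq v^N+1 > v^N$. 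Raising to the power $\alpha>0$ and subtracting gives
\begin{equation*}
n^\alpha - m^\alpha \;>\; v^{N\alpha} - v^{(N-1)\alpha} \;=\; v^{N\alpha}\bigl(1 - v^{-\alpha}\bigr).
\end{equation*}

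The last ingredient is the precise value of $v$. By (\ref{0704_2}) we have $v = 2^{1/(1-\alpha)}$ for $\alpha\in(0,1)$, so $v^{1-\alpha}=2$, hence $v^{\alpha}=v/2$ and $v^{-\alpha}=2/v$. Thus $1-v^{-\alpha}=1-2/v$, and I package the inequality as
\begin{equation*}
t_n - t_m \;\geq\; \frac{\kappa}{\alpha}\bigl(1-\tfrac{2}{v}\bigr) v^{N\alpha} \;=\; \frac{\kappa}{\alpha}\cdot\frac{1-2/v}{1-1/v}\cdot\bigl(1-\tfrac{1}{v}\bigr) v^{N\alpha},
\end{equation*}
which yields the claim with $c := \frac{\kappa}{\alpha}\cdot\frac{1-2/v}{1-1/v}$, a strictly positive constant since $v>2$ when $\alpha\in(0,1)$. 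The case $\alpha>1$ follows the same template with the integral comparison on the increasing function $x^{\alpha-1}$; here the analogous identity $v^{\alpha-1}=2$ gives $v^{-\alpha}=1/(2v)<1/v$, so the factor $1-v^{-\alpha}$ is automatically at least $1-1/v$ and no rearrangement is needed.

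I do not anticipate a conceptual obstacle here — the argument is purely bookkeeping. The one place requiring care is the last step: the naive integral estimate produces the factor $(1-v^{-\alpha})$, not $(1-1/v)$, and one must invoke the specific choice of $v$ (rather than a generic parameter) to compare the two and extract a positive constant $c$.
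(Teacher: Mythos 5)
Your argument is correct and follows essentially the paper's own route: telescope, lower-bound $\sum_{k=m}^{n-1}k^{\alpha-1}$ by $\tfrac{1}{\alpha}(n^\alpha-m^\alpha)$ (the paper uses the mean value theorem where you use an integral comparison), and then use $n>v^{N}$, $m<v^{N-1}$. The only divergence is the final step: the paper applies the mean value theorem once more to get $v^{N\alpha}-v^{(N-1)\alpha}\geq \alpha\,v^{N(\alpha-1)}(v^{N}-v^{N-1})=\alpha(1-1/v)v^{N\alpha}$, so the $1/\alpha$ cancels and the stated constant emerges exactly, whereas your substitution $v^{-\alpha}=2/v$ leaves a residual factor $\tfrac{1}{\alpha}(1-2/v)$; note that for $\alpha>1$ this need not dominate $1-1/v$ (e.g.\ $\alpha=3$, $v=\sqrt{2}$ gives $\tfrac13(1-\tfrac{1}{2v})\approx 0.22<1-1/v\approx 0.29$), contrary to your closing remark, though this is harmless since the lemma is only invoked up to an $\alpha$-dependent constant.
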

\begin{proof}
We have 
\begin{align*}
t_n - t_m &= (t_n - t_{n-1}) + (t_{n-1} - t_{n-2}) + \ldots + ( t_{m+1} - t_m) \\
          &\geq c[ (n-1)^{\alpha-1} + (n-2)^{\alpha-1} + \ldots + m^{\alpha -1} ].
\end{align*}
Suppose first $\alpha \in (0,1)$. By the mean value theorem 
if $ a < b$ we have $\alpha a^{\alpha - 1} \geq b^\alpha - a^\alpha \geq \alpha b^{\alpha - 1}$.
\\
Hence, 
\begin{align*}
c[ (n-1)^{\alpha-1} &+ (n-2)^{\alpha-1} + \ldots + m^{\alpha -1} ] \geq (c/\alpha)[n^\alpha - m^\alpha] \\
	\notag	&\geq (c/\alpha)[v^{N\alpha} - v^{(N-1) \alpha}] \geq (c/\alpha) \alpha v^{N(\alpha -1)}[ v^N - v^{N-1}] \\
	\notag	&=c ( 1-1/v) v^{\alpha N}.
\end{align*}
A similar argument can be used for $\alpha >1$. We omit the details.
\end{proof}

The following lemma generalizes the boundedness of the discrete Hilbert transform.
It is a basic tool in our proof of Theorem \ref{0704_11}. In fact, 
our proof of Theorem \ref{0704_11} only works for values of $\alpha > 1/2$ because the following lemma does not 
hold for $\alpha \leq 1/2$ (see Remark \ref{0404_1}).  

\begin{lemma} \label{0704_13}
Suppose $\alpha \in (1/2,\infty)$ and  $\{ t_k \}_{1}^{\infty}$ satisfies (\ref{0704_1}).
Then there is a constant $\tilde{C}>0$ depending only on $\alpha$ such that  for any $(b_m) \in \ell^2(\mathbb{N})$ we have 
\begin{align} \label{0720_1}
\sum_{n=1}^{\infty} &\left| \sum_{m \neq n} \frac{m^{\alpha -1} b_m}{t_m - t_n}\right|^2 \leq \tilde{C}  \|b\|^2.
\end{align}
\end{lemma}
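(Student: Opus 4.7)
The plan is to view (\ref{0720_1}) as the squared $\ell^2$-norm of the operator with kernel $K(n,m)=m^{\alpha-1}/(t_m-t_n)$ ($m\neq n$) applied to $b$, partition $\mathbb{N}$ into the blocks $V_M$, and decompose this operator as $T=T_{\mathrm{near}}+T_{\mathrm{far}}$ according to whether the block indices $M(m),N(n)$ satisfy $|M-N|\leq 1$ or $|M-N|\geq 2$. Following the excerpt's convention I sketch only $\alpha\in(1/2,1)$; the $\alpha>1$ case is parallel, with the roles of the minimum and maximum of $k^{\alpha-1}$ on a block reversed.

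For $T_{\mathrm{near}}$, fix $N$ and apply Lemma~\ref{c} to the finite sequence $(t_k)_{k\in V_{N-1}\cup V_N\cup V_{N+1}}$. Since $k<v^{N+2}$ in this super-block and $\alpha<1$, condition (\ref{0704_1}) gives the spacing lower bound $\delta\geq\kappa v^{(\alpha-1)(N+2)}$, hence $\|G_a\|\leq C\kappa^{-1}v^{(1-\alpha)(N+2)}$. Feeding in $c_m=m^{\alpha-1}b_m$ and using $m^{\alpha-1}\leq v^{(N-1)(\alpha-1)}$ for $m$ in the super-block, the powers of $v$ combine into an $N$-independent constant times $\|b_{V_{N-1}\cup V_N\cup V_{N+1}}\|^2$; summing over $N$, with each $V_M$ counted at most three times, yields $\|T_{\mathrm{near}}b\|^2\leq C_1(\alpha,\kappa)\|b\|^2$.

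For $T_{\mathrm{far}}$, Lemma~\ref{0704_12} supplies $|t_m-t_n|\geq \kappa(1-1/v)v^{\alpha\max(M,N)}$ whenever $|M-N|\geq 2$. Combined with $m^{\alpha-1}\leq v^{M(\alpha-1)}$ and $|V_M|\leq v^{M+1}$, a Hilbert--Schmidt estimate of the block piece $T_{N,M}:\ell^2(V_M)\to\ell^2(V_N)$ produces $\|T_{N,M}\|\leq C_0 v^{-\eta|N-M|}$ with $\eta=(2\alpha-1)/2$ when $M\leq N-2$ (and $\eta=1/2$ when $M\geq N+2$). The hypothesis $\alpha>1/2$ enters \emph{exactly here}: it makes $\eta>0$ so the off-diagonal block norms decay geometrically. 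Schur's test applied to the scalar matrix $(\|T_{N,M}\|)$, whose row and column sums are bounded by $\sum_{k\geq 2}v^{-\eta k}<\infty$, then concludes boundedness of $T_{\mathrm{far}}$.

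The main technical hurdle is the Hilbert--Schmidt bookkeeping that identifies $2\alpha-1$ as the critical off-diagonal decay exponent; this is what pinpoints the breakdown at the threshold $\alpha=1/2$ advertised in Remark~\ref{0404_1}. Beyond that, all constants depend only on $\alpha$ and $\kappa$, giving the desired inequality with $\tilde{C}=\tilde{C}(\alpha,\kappa)$.
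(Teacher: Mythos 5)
Your proof is correct, and its skeleton coincides with the paper's: the same $v$-adic blocks $V_M$, the same use of Lemma~\ref{0704_12} to get the separation $v^{\alpha\max(M,N)}$ for $|M-N|\geq 2$, and the same Hilbert--Schmidt bookkeeping on the far blocks that isolates $2\alpha-1$ as the decay exponent (your exponent computation $N+M+2M(\alpha-1)-2\alpha\max(M,N)=(2\alpha-1)(M-N)$ for $M\le N-2$, resp.\ $N-M$ for $M\ge N+2$, checks out). The two genuine differences are in the glue and in the near-diagonal part. For the glue, the paper does not use a block Schur test; it inserts geometric weights $\gamma^{|N-M|}$ into a Cauchy--Schwarz over the block index $M$ and then sums the resulting series in $|N-M|$ --- functionally equivalent to your Schur test, with no advantage either way. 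For the near-diagonal part $|M-N|\le 1$, the paper applies another Cauchy--Schwarz in $m$ and estimates $\sum_{m\in V_M} m^{2(\alpha-1)}(t_m-t_n)^{-2}$ by a Hilbert--Schmidt-type count; your route instead applies Lemma~\ref{c} (the generalized discrete Hilbert transform bound) to the super-block $V_{N-1}\cup V_N\cup V_{N+1}$, with the spacing $\delta\asymp\kappa v^{(N+2)(\alpha-1)}$ cancelling against $\max_m m^{\alpha-1}\le v^{(N-1)(\alpha-1)}$ and the overlap factor $3$ from summing over $N$. Your treatment is the more robust of the two: a pure Cauchy--Schwarz on the diagonal block is intrinsically lossy (the diagonal block of a Hilbert-transform-type kernel is bounded but not uniformly Hilbert--Schmidt, its HS norm growing like $(\#V_N)^{1/2}$), and the paper's displayed $S_2$ chain implicitly uses a lower bound on $|t_m-t_n|$ that fails for adjacent indices; invoking the DHT machinery of Lemma~\ref{c}, which the paper sets up but does not deploy here, is exactly the right fix. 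One cosmetic point: as you note, the constant inevitably depends on $\kappa$ as well as $\alpha$ (the lemma's statement suppresses this), and your far-block separation should be quoted with the off-by-one convention of Lemma~\ref{0704_12} ($n-1\in V_N$), which affects nothing.
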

\begin{remark} \label{0404_1}
Of course, (\ref{0720_1}) does not hold if $\sum 1/t_n^{2} = \infty$
(even for $b = e_1$, i.e., $b(1) = 1,\quad  b(m) = 0, \, m>1$) so $t_n = n^a,\quad 0< a \leq 1/2$ or sequences with the growth condition (\ref{0704_1}) with $a \leq 1/2$ 
could not be analyzed with some analog of  Lemma \ref{0704_13} or 
Lemma \ref{0704_17}.
\end{remark}
\begin{proof}
Let $b \in \ell^2(\mathbb{N})$.
First suppose $1/2 < \alpha < 1$ so that $v$ from (\ref{0704_2}) 
can be written as $v = 2^{2(1+\delta)}$ with $\delta>0$.
Set $\gamma = \frac{1 + 2^{2 \delta}}{2}$.
By Cauchy's inequality we have 
\begin{align}
\label{0704_16}
\sum_{n=1}^{\infty} &\left| \sum_{m \neq n} \frac{m^{\alpha -1} b_m}{t_m - t_n}\right|^2
=\sum_{N=1}^{\infty} \sum_{n \in V_N} \left| \sum_{M=1}^{\infty} \sum_{m \in V_M} \frac{m^{\alpha -1} b_m}{t_m - t_n}\right|^2 \\
\notag
&\leq \left( \frac{2}{1-\gamma^{-1/2}}\right) \sum_{N=1}^{\infty} \sum_{n \in V_N} \sum_{M=1}^{\infty} \gamma^{|N-M|} \left| 
\sum_{m \in V_M} \frac{m^{\alpha -1} b_m}{t_m - t_n} \right|^2 
 \\
\notag
&=\left( \frac{2}{1-\gamma^{-1/2}}\right) (S_1 + S_2)
\end{align}
where
\begin{align*}
S_1 &= \sum_{N=1}^{\infty} \sum_{n \in V_N} \sum_{\substack{M=1 \\ |M-N|>1}}^{\infty} \gamma^{|N-M|} \left| 
\sum_{m \in V_M} \frac{m^{\alpha -1} b_m}{t_m - t_n} \right|^2 
\\
S_2 &=
\sum_{N=1}^{\infty} \sum_{n \in V_N} \sum_{\substack{M=1 \\ |M-N| \leq 1}}^{\infty} \gamma^{|N-M|} \left| 
\sum_{m \in V_M} \frac{m^{\alpha -1} b_m}{t_m - t_n} \right|^2.
\end{align*}

By Lemma \ref{0704_12} and another application of Cauchy's inequality
\begin{align}
\label{0704_14}
S_1 &\leq 
\left[
\sum_{N=1}^{\infty} \sum_{n \in V_N} \sum_{\substack{M=1 \\ |M-N|>1}}^{\infty} \gamma^{|N-M|}  
\sum_{m \in V_M} \frac{m^{2(\alpha -1)} }{(t_m - t_n)^2} \right]   \sum_{m \in V_M} b_m^2
\\
\notag
&\leq  \sum_{M=1}^{\infty} 
\sum_{m \in V_M} b_m^2 \sum_{\substack{N=1 \\ |N-M|>1}}^{\infty}  \frac{\gamma^{|N-M|} (\#V_M) \cdot (\#V_N) 
2^{-2M}}{(v/2)^{2 \text{max}(M,N)} }
\\
\notag
&\leq \sum_{M=1}^{\infty} 
\sum_{m \in V_M} b_m^2 \sum_{\substack{N=1 \\ |N-M|>1}}^{\infty}  \gamma^{|N-M|} \, v^{M+N+2} \, 2^{-2M} \, (2/v)^{2 \text{max}(M,N)} .
\end{align}
We will show that the following is uniformly bounded in $M$
\begin{align*}
\sum_{N=1}^{\infty} \gamma^{|N-M|} \, v^{-|M-N|} \, 2^{2 \text{max}(M,N) - 2M}.
\end{align*}
We have
\begin{align*}
\sum_{N=1}^{\infty} &\gamma^{|N-M|} \, v^{-|M-N|} \, 2^{2 \text{max}(M,N) - 2M} \\
&= \sum_{N=1}^{\infty} \gamma^{|N-M|} \, 2^{-2(1+\delta)|M-N| - 2M} \, 2^{2 \text{max}(M,N)} \\
&\leq \sum_{N=1}^{\infty} \gamma^{|N-M|} \, 2^{-2 \text{max}(M,N) - 2 \delta|M-N|} \, 2^{2 \text{max}(M,N)} \\ 
&= \sum_{N=1}^{\infty} \gamma^{|N-M|} \, \left(2^{-2 \delta}\right)^{|M-N|} \leq \frac{2}{1-\frac{\gamma}{2^{2 \delta}}}.
\end{align*}
Combining this bound with (\ref{0704_14}), we conclude
\begin{align} \label{0704_15}
S_1 \leq \left( \frac{2}{1-\frac{\gamma}{2^{2 \delta}}} \right) \|b \|^2.
\end{align}

Now
\begin{align*}
S_2 &\leq \sum_{N=1}^{\infty} \sum_{n \in V_N} \sum_{ |M-N| \leq 1} \gamma^{|M-N|} 
\left( \sum_{m \in V_M} \frac{m^{2(\alpha -1)}}{(t_m - t_n)^{2}}\right) \left(\sum_{m \in V_M} |b_m|^2 \right) \\
&\leq 
\sum_{N=1}^{\infty} \sum_{ |M-N| \leq 1} (\# F_M) \cdot  (\# F_N) \, \gamma^{|M-N|} \,
2^{-2M} \left( v/2 \right)^{-2 \text{max}(M,N)}\left(\sum_{m \in V_M} |b_m|^2 \right) \\
&\leq 
\sum_{N=1}^{\infty} \sum_{ |M-N| \leq 1} v^{M+N+2} \, \gamma^{|M-N|} \,
 v^{-2 \text{max}(M,N)} \, 2^{-2M+2 \text{max}(M,N)} \left(\sum_{m \in V_M} |b_m|^2 \right) \\
&\leq 
16 \, v^4 \sum_{N=1}^{\infty} \sum_{|M-N|\leq 1} \left( \sum_{m \in V_M} |b_m|^2 \right) 
\leq 3\cdot 16  \cdot v^4 \gamma \| b \|^2.
\end{align*}

Combining these bounds with (\ref{0704_16}) and (\ref{0704_15})
we have
\begin{align*}
\sum_{n=1}^{\infty} &\left| \sum_{m \neq n} \frac{m^{\alpha -1} b_m}{t_m - t_n}\right|^2
\leq 
\left( \frac{2}{1-\gamma^{-1/2}}\right) \left(\left( \frac{2}{1-\frac{\gamma}{2^{2 \delta}}} \right) + 3\cdot 16  v^4 \gamma \right) \|b\|^2.
\end{align*}
So (\ref{0720_1}) is proved for $1/2 < \alpha < 1$.

The proof for $\alpha > 1$ is similar. We omit the details.
\end{proof}

The following lemma can be proven in the same manner as Lemma \ref{0704_13}.
We omit the details.
\begin{lemma} \label{0704_17}
Suppose $\alpha \in (1/2,\infty) \backslash \{1\}$ and  $\{ t_k \}_{1}^{\infty}$ satisfies (\ref{0704_1}).
Let $\{z_k \}$ be a sequence such that $|z_k -t_k | \leq (c/2) k^{\alpha-1} \forall k \in \mathbb{N}$. 
Then there is a constant $C>0$ depending only on $\alpha$ such that  for any $(b_m) \in \ell^2(\mathbb{N})$ we have 
\begin{align*}
\sum_{n=1}^{\infty} &\left| \sum_{m \neq n} \frac{m^{\alpha -1} b_m}{t_m - z_n}\right|^2 \leq C  \|b\|^2.
\end{align*}
\end{lemma}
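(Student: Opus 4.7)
The plan is to mirror the proof of Lemma \ref{0704_13}: perform the same dyadic block decomposition using $V_M$ and $v$ from (\ref{0704_2}), and apply the same Cauchy--Schwarz estimate with the weight $\gamma^{|N-M|}$, writing the left-hand side of the target inequality as $S_1+S_2$ with $S_1$ collecting the pairs of blocks $(V_M,V_N)$ with $|M-N|>1$ and $S_2$ those with $|M-N|\le 1$. Only two places in the previous proof genuinely change to accommodate the complex perturbation $z_n$: the lower bound on the denominator $|t_m - z_n|$ in $S_1$, and the method for bounding the near-block contribution $S_2$.

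For $S_1$, I would use the triangle inequality $|t_m - z_n|\ge |t_m - t_n| - |t_n - z_n|$. By Lemma \ref{0704_12}, for $n\in V_N$, $m\in V_M$ with $|M-N|>1$, we have $|t_m - t_n|\ge c(1-1/v)v^{\alpha N}$; the hypothesis gives $|t_n - z_n|\le (c/2)n^{\alpha-1}\lesssim v^{N|\alpha-1|}$. Since $\alpha > 1/2$, the separation $v^{\alpha N}$ dominates the perturbation $v^{N|\alpha-1|}$ once $N$ is large enough (the finite initial range contributes only an additive constant), so $|t_m - z_n|\ge \tfrac12 c(1-1/v)v^{\alpha N}$. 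From this point on, the estimate of $S_1$ proceeds verbatim as in Lemma \ref{0704_13}, yielding $S_1\le C_1\|b\|^2$ with an adjusted constant.

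The delicate part, and the main obstacle, is $S_2$. Here $|z_n - t_n|$ is of the same order as the local spacing $\kappa k^{\alpha-1}\asymp v^{N(\alpha-1)}$, so a pointwise lower bound on $|t_m - z_n|$ is no longer available; indeed $z_n$ may sit arbitrarily close to some $t_m$. The remedy is Lemma \ref{d}: on the three adjacent blocks $V_{N-1}\cup V_N\cup V_{N+1}$ the successive spacings of $\{t_k\}$, the perturbations $|z_k - t_k|$, and the weights $k^{\alpha-1}$ are each of order $v^{N(\alpha-1)}$, so after rescaling the real axis by $v^{N(1-\alpha)}$ the hypotheses of Lemma \ref{d} are met with $\delta$ and $\Delta$ of order $1$, and Lemma \ref{d} yields an $\ell^2$ bound with constant depending only on $\alpha$. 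Pulling out the essentially constant weight $m^{\alpha-1}\asymp v^{N(\alpha-1)}$ on each block and summing over the bounded number of pairs with $|M-N|\le 1$ gives $S_2\le C_2\|b\|^2$. Combining $S_1$ and $S_2$ with the geometric loss from the initial Cauchy--Schwarz produces the desired bound with $C=C(\alpha)$.
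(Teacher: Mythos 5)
Your proposal is correct, and its skeleton (dyadic blocks $V_M$, the weight $\gamma^{|N-M|}$, the split into $S_1+S_2$) is exactly the route the paper intends, since the paper omits the proof of Lemma~\ref{0704_17} and defers entirely to Lemma~\ref{0704_13}. Two comparative remarks. First, in $S_1$ you slightly misplace the role of $\alpha>1/2$: because $|z_n-t_n|\le(\kappa/2)n^{\alpha-1}\le\tfrac12(t_{n+1}-t_n)$ and, for $n$ beyond a threshold depending on $\alpha$, $|z_n-t_n|\le\tfrac23(t_n-t_{n-1})$, the gap condition (\ref{0704_1}) alone already gives $|t_m-z_n|\ge\tfrac13|t_m-t_n|$ for \emph{every} $m\ne n$ --- no appeal to Lemma~\ref{0704_12} or to $\alpha>1/2$ is needed for this comparison (for $\alpha<1$ the perturbation is of size $v^{N(\alpha-1)}$, not $v^{N|\alpha-1|}$, and is small against $v^{\alpha N}$ for all $\alpha>0$; the hypothesis $\alpha>1/2$ is used, as in Lemma~\ref{0704_13}, only to make the off-diagonal sums $\sum_m m^{2(\alpha-1)}(t_m-t_n)^{-2}$ converge). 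With that uniform comparison the entire argument of Lemma~\ref{0704_13} transfers with an extra absolute factor, which is presumably what the authors mean by ``in the same manner.'' Second, your treatment of $S_2$ --- rescaling the three adjacent blocks so that spacings, perturbations and weights $m^{\alpha-1}$ are all of order one and invoking Lemma~\ref{d} --- is genuinely different from the displayed $S_2$ estimate in the proof of Lemma~\ref{0704_13}, which bounds $(t_m-t_n)^2$ from below by $(v/2)^{2\max(M,N)}$ even for adjacent indices; that pointwise bound is not available near the diagonal, and your rescaled Hilbert-transform argument is the sound way to control those terms (one only has to note that restricting Lemma~\ref{d} to a finite window and padding by zeros does not increase the norm, and that each $m$ occurs in at most three windows). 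So the proposal is complete, modulo routine care with the finitely many small $n$ for which, when $\alpha>1$, $z_n$ need not lie strictly between $t_{n-1}$ and $t_{n+1}$, and in the near-diagonal regime it is in fact more careful than the argument it imitates.
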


The following vector-valued version of Lemma \ref{0704_17} can be proven in the same manner as Lemma \ref{e}.
We omit the details.
\begin{lemma} \label{0722_9}
Suppose $\alpha \in (1/2,\infty) \backslash \{1\}$ and  $\{ t_k \}_{1}^{\infty}$ satisfies (\ref{0704_1}).
Let $\{z_k \}$ be a sequence such that $|z_k -t_k | \leq (c/2) k^{\alpha-1} \forall k \in \mathbb{N}$. 
Then there is a constant $C>0$ depending only on $\alpha$ such that  for any $(b_m) \in \ell^2(H)$ we have 
\begin{align*}
\sum_{n=1}^{\infty} &\left| \sum_{m \neq n} \frac{m^{\alpha -1} b_m}{t_m - z_n}\right|^2 \leq C  \|b\|^2.
\end{align*}
\end{lemma}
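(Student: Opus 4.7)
The plan is to mimic exactly the passage from the scalar Lemma \ref{d} to the vector-valued Lemma \ref{e}, now using the scalar estimate Lemma \ref{0704_17} in place of the $\ell^2$-boundedness used there. The only structural input we need beyond Lemma \ref{0704_17} is the orthonormality of the eigenbasis $\{\phi_k\}$, which lets us separate the $H$-valued sum into independent scalar sums before applying the scalar inequality.

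First I would fix $\xi = (\xi_m) \in \ell^2(H)$ (the excerpt writes $b_m$; I follow that notation) and expand each $\xi_m = \sum_{k=1}^{\infty} \xi_m^{(k)} \phi_k$ in the orthonormal basis. Then for each $n$, since $\{\phi_k\}$ is orthonormal,
\begin{align*}
\left\| \sum_{m \neq n} \frac{m^{\alpha-1} \xi_m}{t_m - z_n} \right\|_H^2
= \sum_{k=1}^{\infty} \left| \sum_{m \neq n} \frac{m^{\alpha-1} \xi_m^{(k)}}{t_m - z_n} \right|^2.
\end{align*}
Summing over $n$ and interchanging the (nonnegative) double sum in $n$ and $k$ by Tonelli gives
\begin{align*}
\sum_{n=1}^{\infty} \left\| \sum_{m \neq n} \frac{m^{\alpha-1} \xi_m}{t_m - z_n} \right\|_H^2
= \sum_{k=1}^{\infty} \sum_{n=1}^{\infty} \left| \sum_{m \neq n} \frac{m^{\alpha-1} \xi_m^{(k)}}{t_m - z_n} \right|^2.
\end{align*}

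Next, for each fixed $k$ the sequence $b^{(k)} := (\xi_m^{(k)})_{m=1}^{\infty}$ lies in $\ell^2(\mathbb{N})$ (indeed $\sum_k \|b^{(k)}\|_{\ell^2}^2 = \|\xi\|_{\ell^2(H)}^2$ by Parseval). Applying the scalar estimate Lemma \ref{0704_17} to $b^{(k)}$ produces
\begin{align*}
\sum_{n=1}^{\infty} \left| \sum_{m \neq n} \frac{m^{\alpha-1} \xi_m^{(k)}}{t_m - z_n} \right|^2 \leq C \, \|b^{(k)}\|_{\ell^2}^2,
\end{align*}
with a single constant $C = C(\alpha)$ independent of $k$ (the hypothesis $|z_k - t_k| \leq (c/2)k^{\alpha-1}$ is on the sequence $\{z_k\}$ alone, so Lemma \ref{0704_17} applies uniformly in $k$). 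Summing on $k$ and invoking Parseval once more,
\begin{align*}
\sum_{n=1}^{\infty} \left\| \sum_{m \neq n} \frac{m^{\alpha-1} \xi_m}{t_m - z_n} \right\|_H^2 \leq C \sum_{k=1}^{\infty} \|b^{(k)}\|_{\ell^2}^2 = C \, \|\xi\|_{\ell^2(H)}^2,
\end{align*}
which is the claim with the same constant $C$ that appeared in Lemma \ref{0704_17}.

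There is no genuine obstacle here: the argument is a textbook transfer of a scalar $\ell^2 \to \ell^2$ bound to its $\ell^2(H) \to \ell^2(H)$ counterpart when the kernel is scalar-valued, and the only point one must be mildly careful about is the legitimacy of swapping the $n$- and $k$-summations, which is immediate from Tonelli's theorem since all terms are nonnegative. (In particular, one does not need absolute convergence of the inner sums in $m$ at this stage; one first applies the scalar lemma to guarantee that the $n$-sums are finite for each $k$, and only then reassembles.)
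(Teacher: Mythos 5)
Your proof is correct and is exactly the argument the paper intends: it declares that Lemma \ref{0722_9} ``can be proven in the same manner as Lemma \ref{e}'' and omits the details, and your expansion in the orthonormal basis, Tonelli interchange, coordinatewise application of Lemma \ref{0704_17}, and Parseval reassembly is precisely that argument. Nothing to add.
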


\section{Proof of Proposition \ref{A} and Proposition \ref{0720_A}}
\begin{proof}
Let $z \notin  \cup_{0}^{\infty} \Pi_k$. To show $z \notin \text{Sp} L$ it suffices to show $\| B R^0(z) \| \leq 1/2$ since then
$R(z) = R^0(z) (I - BR^0(z))^{-1}$ is well defined.
To this end let $f \in H$ with $\| f \|^2 = 1, \quad f = \sum f_k \phi_k$. 
\\
We have
\begin{align*}
\| B R^0(z) f \|^2 &= \| B R^0(z) \sum f_k \phi_k \|^2 = \| \sum \frac{f B \phi_k}{z-t_k} \|^2 \\
		&\leq \| f \|^2  \sum \frac{\beta_k}{|z - t_k|^2} = \sum \frac{\beta_k}{|z-t_k|^2}.
\end{align*}

Consider first the case $\text{Re} z  \in F_m$, for some  $m \geq M+N+1$. 
We have 
\begin{align} \label{4}
\sum \frac{\beta_k}{|z-t_k|^2} &= \sum \frac{\beta_k}{(\text{Re} z - t_k)^2 + \text{Im}z^2} \\
\notag		&\leq \left( \frac{2p}{d}\right)^2 \left(\sum_{j \in F_m} \beta_j 
		+ \left[ \sum_{J=1}^{N} + \sum_{J=N+1}^{\infty} \right]   \frac{1}{J^2}\sum_{j \in F_{m\pm J}} \beta_j  
		\right).
\end{align}
Because $ m \geq M+N+1$, $m \pm N \geq M$ whenever $J < N$. So, by (\ref{2}) we have 
\begin{align} \label{5}
 \sum_{j \in F_m} \beta_j 
	+ \sum_{J=1}^{N}  \frac{1}{J^2}\sum_{j \in F_{m\pm J}} \beta_j \leq 
	p\left( 1 + 2 \sum_{J=1}^{N} \frac{1}{J^2} \right) \left(\frac{1}{8p(1+ \pi^2/3)} \right) \left( \frac{d}{2p}\right)^2
\end{align}
and by (\ref{3}) we have
\begin{align} \label{6}
\sum_{J=N+1}^{\infty}  \frac{1}{J^2}\sum_{j \in F_{m \pm J}} \beta_j \leq 
2p \left( \sum_{J=N+1}^\infty \frac{1}{J^2} \right) \left( \frac{d^2}{16 p^2} \right) \left( p \sum_{J=N+1}^{\infty} \frac{1}{J^2}
\right).
\end{align}
Combining (\ref{4}) with (\ref{5}-\ref{6}) we conclude that 
\begin{align*}
\| B R^0(z) f \|^2 \leq 1/4
\end{align*}
whenever $\text{Re} z \in F_m$ for some $m \geq M+N+1$.

Now consider the case $\text{Re} z < T_{M+N+1} + \frac{d}{2p}$,  i.e. $\text{Re} z \notin F_m \quad \forall m \geq M+N+1$.
Then $z \notin \cup_{0}^{\infty} \Pi_j$ implies $|z-t_k|^2 \geq h^2 + \text{Re} (|z| - t_k)^2.$
\\
Thus
\begin{align*}
\sum_{k=1}^{\infty} \frac{\beta_k}{|z-t_k|^2} &\leq \sum_{k=1}^{\infty} \frac{\beta_k}{h^2 + (|\text{Re}z| - t_k)^2} \\
 &=\sum_{J=1}^{\infty} \sum_{j \in F_J} \frac{\beta_k}{h^2 + (|\text{Re}z| - t_k)^2} \\
 &\leq 2 \sum_{J=0}^{\infty} 
 \frac{p \| \beta \|_{\infty}}{h^2 + (\frac{Jd}{2p})^2 } \leq  \frac{2p \| \beta \|_{\infty}}{8p \|\beta \|_{\infty}}
=1/4.
\end{align*}
So we have shown that $\|R^0(z) B \|^2 \leq 1/4$ for all $z \notin \Pi_0 \cup \cup_{k=K+1}^{\infty} \Pi_k.$ 
Thus, $\text{Sp} L \subset \Pi_0 \cup \cup_{k=K+1}^{\infty} \Pi_k.$
Also,  
$\| R(z) \|  = \| R^0(z)(I - BR^0(z))^{-1} \| \leq \|R^0(z) \| (1/2) \leq d/p$ for all $z \notin \Pi_0 \cup \cup_{k=K+1}^{\infty} \Pi_k$.

A standard argument (see \cite{naimark}) shows that 
\begin{align*}
\text{Trace} \frac{1}{2 \pi i} \int_{ \Gamma_n} \left( z - T - tB \right)^{-1} dz, \quad 0 \leq t \leq 1,
\end{align*}
is a continuous integer-valued scalar function so it is constant and (\ref{0722_1} - \ref{0722_2}) hold. 
\end{proof}

\section{Proof of Theorem \ref{B}}

We first reproduce Lemma 4.17(a) from \cite{katobook}. See also \cite{katopaper}.
\begin{lemma} \label{0710_1}
Let $\{Q^0_k\}_{j \in \mathbb{Z}_+}$ be a complete family of orthogonal projections in a 
Hilbert space $X$ and let 
$\{Q_k\}_{j \in \mathbb{Z}_+}$ be a family of (not necessarily orthogonal) projections such that
$Q_j Q_k = \delta_{j,k}Q_j$. Assume that 
\begin{align*}
\text{dim}(Q^0_0) = \text{dim}(Q_0) = m< \infty \\
\sum_{j=1}^\infty \| Q^0_j(Q_j-Q^0_j)u\|^2 \leq c_0 \|u\|^2, \quad \text{for every } \quad u \in X
\end{align*}
where $c_0$ is a constant smaller than $1$. Then there is a bounded operator $W: X \rightarrow X$ with bounded inverse
such that $Q_j = W^{-1} Q^0_j W$ for $j \in \mathbb{Z}_+$.
\end{lemma}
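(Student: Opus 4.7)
The plan is to write down an explicit intertwining operator $W$ as a bounded perturbation of the identity, verify the intertwining algebraically, and then handle invertibility via Fredholm theory. I would set
\[
W \;=\; \sum_{j=0}^{\infty} Q^0_j Q_j,
\]
interpreted as a strong-operator sum, and rewrite this using the completeness $\sum_j Q^0_j = I$ as $W = I + R$ with
\[
R \;=\; \sum_{j=0}^{\infty} Q^0_j (Q_j - Q^0_j).
\]

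The intertwining is then a short bookkeeping check that uses $Q_k Q_j = \delta_{jk} Q_j$ and the orthogonality/completeness of the family $\{Q^0_k\}$: on the one hand
\[
Q^0_j W \;=\; Q^0_j + Q^0_j (Q_j - Q^0_j) \;=\; Q^0_j Q_j,
\]
and on the other hand
\[
W Q_j \;=\; Q_j + \sum_k Q^0_k (Q_k - Q^0_k) Q_j \;=\; Q_j + Q^0_j Q_j - Q_j \;=\; Q^0_j Q_j,
\]
so $Q^0_j W = W Q_j$ for every $j \geq 0$. Boundedness of $R$ comes from the mutual orthogonality of the ranges of the $\{Q^0_j\}$, which lets one expand
\[
\|Ru\|^2 \;=\; \sum_{j=0}^{\infty} \|Q^0_j(Q_j - Q^0_j) u\|^2.
\]
The hypothesis controls the tail $j \geq 1$ by $c_0 \|u\|^2$, while the single $j=0$ term is bounded because $Q^0_0 (Q_0 - Q^0_0)$ has rank at most $m$.

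The main obstacle is invertibility of $W$. I would split $R = R_1 + F$ with $R_1 = \sum_{j \geq 1} Q^0_j(Q_j - Q^0_j)$ and $F = Q^0_0 (Q_0 - Q^0_0)$. The hypothesis gives $\|R_1\| \leq \sqrt{c_0} < 1$, so $I + R_1$ is invertible by a Neumann series, and $F$ has finite rank. Hence $W = (I + R_1) + F$ is a finite-rank perturbation of an invertible operator, and so it is Fredholm of index zero; invertibility reduces to injectivity. For the latter, if $W u = 0$ then applying the orthogonal projection $Q^0_k$ to the strong sum $\sum_j Q^0_j Q_j u = 0$ gives $Q^0_k Q_k u = 0$ for every $k$; combined with the equal-dimension condition $\dim(\mathrm{ran}\, Q^0_0) = \dim(\mathrm{ran}\, Q_0) = m$ at $k = 0$ and the smallness of $R_1$ for $k \geq 1$ (which makes $Q^0_k$ an isomorphism from $\mathrm{ran}\, Q_k$ onto $\mathrm{ran}\, Q^0_k$), one obtains $Q_k u = 0$ for all $k$ and hence $u = 0$. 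Once $W$ is known to be bounded and invertible, the conclusion $Q_j = W^{-1} Q^0_j W$ is just a rearrangement of the intertwining identity.
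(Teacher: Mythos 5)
The paper itself offers no proof of this lemma: it is quoted from Kato (\cite{katobook}, Lemma 4.17(a); see also \cite{katopaper}), so you are supplying an argument the authors deliberately outsource. The skeleton you chose is the standard one and most of it is sound: $W=\sum_j Q^0_jQ_j=I+R$ converges strongly because the vectors $Q^0_j(Q_j-Q^0_j)u$ lie in mutually orthogonal ranges and have square-summable norms; the identities $Q^0_jW=Q^0_jQ_j=WQ_j$ are correct; orthogonality gives $\|R_1\|\le\sqrt{c_0}<1$; and $W=(I+R_1)+F$ with $F$ of finite rank is Fredholm of index zero, so invertibility does reduce to injectivity.

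The injectivity step, however, has a genuine gap. The claim that smallness of $Q^0_k(Q_k-Q^0_k)$ ``makes $Q^0_k$ an isomorphism from $\operatorname{ran}Q_k$ onto $\operatorname{ran}Q^0_k$'' is not a valid deduction. Applying the hypothesis to the vector $Q^0_ku$ and keeping only the $j=k$ term gives $\|Q^0_kQ_kQ^0_ku-Q^0_ku\|\le\sqrt{c_0}\,\|Q^0_ku\|$, which shows $Q^0_kQ_kQ^0_k$ is invertible on $\operatorname{ran}Q^0_k$ and hence that $Q^0_k$ maps $\operatorname{ran}Q_k$ \emph{onto} $\operatorname{ran}Q^0_k$; but injectivity would require $\dim Q_k\le\dim Q^0_k$, which is not among the hypotheses for $k\ge 1$. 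The hypothesis controls only the block $Q^0_k(Q_k-Q^0_k)$, not $(I-Q^0_k)(Q_k-Q^0_k)$: for instance $Q^0_k(Q_k-Q^0_k)=0$ is perfectly compatible with $\operatorname{ran}Q_k\supsetneq\operatorname{ran}Q^0_k$ (take $Q_k$ an orthogonal projection whose range strictly contains $\operatorname{ran}Q^0_k$), in which case $Q^0_k$ is certainly not injective on $\operatorname{ran}Q_k$. Separately, even if you had established $Q_ku=0$ for every $k$, the final ``hence $u=0$'' needs completeness of the family $\{Q_k\}$, which is a consequence of the lemma rather than a hypothesis; what the $\ell^2$ condition actually yields in that situation is $Q^0_j(Q_j-Q^0_j)u=-Q^0_ju$, hence $\sum_{j\ge1}\|Q^0_ju\|^2\le c_0\|u\|^2$ and $\|Q^0_0u\|^2\ge(1-c_0)\|u\|^2$, which confines the putative kernel to an $m$-dimensional piece seen by $Q^0_0$ but does not annihilate it. Closing the argument requires a further idea in which the hypothesis $\dim Q_0=\dim Q^0_0=m$ genuinely enters (your sketch never really uses it); for the complete argument see Kato's note \cite{katopaper}.
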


We are now ready to prove Theorem \ref{B}.
\begin{proof}
By Lemma \ref{0710_1}  it suffices to show $\exists N_* \in \mathbb{N}$ such that 
for all $f \in H$ with $\|f \| =1$,
\begin{align*}
\sum_{ n \geq N_*} \| P_n^0 ( P_n - P_n^0) f \|^2 \leq 1/2.
\end{align*}
Fix $n > K$ (with $K$ from Proposition \ref{A}) and $f = \sum f_k \phi_k \in H$ with $\| f \| =1$. Then
\begin{align*}
P_n - P^0_n = \frac{1}{2 \pi i} \int_{\Gamma_n} (R(z) - R^0(z) ) dz  = \frac{1}{2 \pi i} \int_{\Gamma_n} R(z) B R^0(z) dz.
\end{align*}
So by Proposition \ref{0720_A}, inequality  (\ref{0720_2}),
\begin{align} \label{0720_3}
\| ( P_n - P_n^0) f \|^2 &= \frac{1}{2 \pi} \| \int_{\Gamma_n} R(z) B R^0(z) f dz \|^2 \\
\notag	                 &\leq  \frac{1}{2 \pi} \left[ \int_{\Gamma_n} \| R(z) B R^0(z) f \| \right]^2
			  \leq  C \left[\int_{\Gamma_n} \| B R^0(z) f \| \right]^2 \\
\notag			 & = C \left[ \int_{\Gamma_n} 
			 \| 
			 \sum_{k=1}^{\infty} 
			 \frac{f_k B \phi_k}{z - t_k}
			 \| dz
			 \right]^2
\end{align}
where $C = \frac{1}{2 \pi}(d/p)^2$.

For $n \geq K$ define $z^*_n \in  \Gamma_n$ to be a point where the maximum of the sum 
\begin{align*}
\| \sum_{k=1}^{\infty} \frac{f_k B \phi_k}{ z - t_k} \|
\end{align*}
is attained.
Note that $(z^*_n)$ depends on $f$.  Since $|\Gamma_k| \leq 3C \quad \forall k \geq 1$, we have by (\ref{0720_3})
\begin{align} \label{0619_4}
\| (P_n - P_n^0) f \|^2 &\leq C | \Gamma_n |^2  \| \sum_{k=1}^{\infty} \frac{f_k B \phi_k}{ z - t_k} \|^2 \\
\notag
&\leq  C (3d)^2 \| \sum_{k=1}^{\infty} \frac{f_k B \phi_k}{ z - t_k} \|^2.
\end{align}

Suppose, for now, that $p=1$ so that $\# (\tau \cap F_k) \leq 1$ $\forall k \geq 1$. We will show 
that given any $\epsilon > 0$ if we choose $N_1$ as in (\ref{0619B}) below, 
then 
\begin{align} \label{0619_1}
 \sum_{ n \geq N_1} \| \sum_{k=1}^{\infty} \frac{f_k B \phi_k}{z^*_n - t_k} \|^2 < \epsilon \quad \forall \|f \| =1.
\end{align}
Note that $z^*_n \in  \Gamma_n$ depends on $f$.

Recall that $G$ is the cannonical discrete Hilbert transform and set 
\begin{align*}
C_1 = 4 \left( \frac{d \|G \|}{p} + \frac{ \pi^2}{3dp} \right).
\end{align*}
Select $M_1$ large enough so that
\begin{align}
\label{0619A}
\| B \phi_k \|^2 \leq \epsilon/C_1 \quad \forall k \geq M_1
\end{align}
and $N_1$ large enough so that  whenever $w \in  \Gamma_n \quad \forall n \geq K$
\begin{align}
\label{0619B}
\sum_{n = N_1}^{\infty} |w - t_{M_1}|^{-2} \leq  \frac{\epsilon}{4 \| \beta \|_{\infty} M_1} \quad \forall m \leq M_1.
\end{align}
Then 
\begin{align*}
 \| \sum_{k=1}^{\infty} \frac{f_k B \phi_k}{z^*_n - t_k} \|^2 \leq 
 2 \left[ \| \sum_{k \leq M_1}\frac{f_k B \phi_k}{z^*_n - t_k} \|^2 + \| \sum_{k > M_1} \frac{f_k B \phi_k}{z^*_n - t_k} \|^2 \right].
\end{align*}
By Cauchy's inequality we have
\begin{align*}
\| \sum_{k \leq M_1}\frac{f_k B \phi_k}{z^*_n - t_k} \|^2 \leq  
\left( \sum_{k \leq M_1} |f_k|^2  \| B \phi_k \|^2 \right)\left( \sum_{k \leq M_1} |z^*_n - t_k|^{-2} \right).
\end{align*}
So by (\ref{0619B})
\begin{align} \label{0720_4}
2 \sum_{n \geq N_1} \| \sum_{k \leq M_1}\frac{f_k B \phi_k}{z^*_n - t_k} \|^2 \leq 
\| \beta \|_{\infty} M_1 \sum_{ n \geq N_1} |z^*_n - t_{M_1} |^{-2} < \epsilon/2.
\end{align}

It follows from Lemma \ref{e} and (\ref{0619A}) that
\begin{align} \label{0720_5}
2 \sum_{n \geq N_1} \| \sum_{ k > M_1} \frac{f_k B \phi_k}{z^*_n - t_k} \|^2 &\leq 
2 \| (f_k B \phi_k )_{k=M_1}^{\infty} \|_{\ell^2(H)}^2 C_1 \\
\notag
&\leq \sup_{k \geq M_1} \| B \phi_k \|^2 C_1 < \epsilon/2.
\end{align}
Hence, combining (\ref{0720_4}) and (\ref{0720_5}) we have proven (\ref{0619_1}).

Now suppose $p > 1$. Reindex the sequences $t_k$, $f_k$, and $\phi_k$ in such a way that 
for all $k \geq 1$
\begin{align*} 
\tau \cap F_k = \{ t_k^{(1)} \leq t_k^{(2)} \leq \ldots \leq t_k^{(J_k)} \}, \quad J_k \leq p.
\end{align*}
Then for $\tilde{K} \geq K$,
\begin{align*}
C\sum_{n \geq \tilde{K} } & \| \sum_{k=1}^{\infty} \frac{ f_k B \phi_k}{z^*_n - t_k} \|^2  = 
C\sum_{n \geq \tilde{K}}  \| \sum_{k=1}^{\infty} \sum_{j=1}^{J_k} \frac{ f^{(j)}_k B \phi^{(j)}_k}{z^*_n - t^{(j)}_k} \|^2 \\
&\leq  2^p C\sum_{j=1}^{p} \sum_{n \geq \tilde{K}}  \| \sum_{k=1}^{\infty}  \frac{ f^{(j)}_k B \phi^{(j)}_k}{z^*_n - t^{(j)}_k} \|^2.
\end{align*}
Note that if $J_k <p$ some terms in the series are taken to be $0$.
For each $j \leq p$ the sequence $t^{(j)}_1 \leq t^{(j)}_2 \leq \ldots $ satisfies
(\ref{1_0615}) with $p=1$. So by taking $\epsilon = 1/(2 \cdot 2^p C)$ and applying   (\ref{0619_1}) for each $j \leq p$
the Theorem is proven  by (\ref{0619_4}).
\end{proof}

\section{Proof of Proposition \ref{0704_10} and Proposition \ref{0722_4}}
Suppose that $z \notin \Pi_0 \cup \cup_{j= \ell+1}^{\infty} \Pi_j.$
We will show that $\|R^0(z) B\|^2 \leq 1/2$. It follows that 
\begin{align}
\label{0722_6}
R(z) = (I - R^0(z)B)^{-1} R^0(z)
\end{align}
is 
well defined. Let $f \in H$ with $\|f \| =1$, $f = \sum f_k \phi_k$. 
\\
Then 
\begin{align*}
\|R^0(z) B f \|^2 &= \| \sum f_j B R^0(z) \phi_j \|^2 \\
                  &=\| \sum f_j \frac{B \phi_j}{z-t_j} \|^2 \leq \left[ \sum |f_j| \frac{\| B \phi_j \|}{|z-t_j|} \right]^2 \\
		  &\leq \sum \frac{\|B \phi_j \|^2}{|z - t_j|^2} = \sum \frac{c_j^2 j^{2(\alpha - 1)}}{|z - t_j|^2}.
\end{align*}
Suppose first that $ \text{Re } z > t_\ell + (c_\ell / 2 )\ell^{\alpha -1}$ so that $\text{Re } z \in  [v^{\tilde{N}}, v^{\tilde{N}+1})$ 
for some $\tilde{N} > N$.
Then 
\begin{align*} 
\sum_{j=1}^{\infty} \frac{c_j^2 j^{2(\alpha -1)}}{|z - t_j|^2} = S_1 + S_2
\end{align*}
with
\begin{align*}
S_1 = \sum_{J=1}^{\tilde{N}-1-N/2} \sum_{j \in V_J} \frac{c_j^2 j^{2 (\alpha - 1)}}{|z - t_j|^2}, \quad  
S_2 = \sum_{J=\tilde{N} - N/2}^{\infty} \sum_{j \in V_J} \frac{c_j^2 j^{2 (\alpha - 1)}}{|z - t_j|^2}.
\end{align*}
We have 
\begin{align*}
S_1 \leq c_\infty^2 \sum_{J=1}^{\tilde{N}-1 -N/2 } \sum_{j \in V_J} \frac{j^{2(\alpha -1)}}{|z-t_j|^2}.
\end{align*}
If $0 < \alpha < 1$, then for each $J \leq \tilde{N}-1 - N/2 ,$
\begin{align*}
\sum_{j \in V_J} \frac{j^{2(\alpha -1)}}{|z - t_j|^2} &\leq \#F_J \frac{2^{-2 J}}{v^{2(\tilde{N}-1)\alpha}}  \\
&= \#F_J 2^{-2 J} (v/2)^{2} (v/2)^{-2\tilde{N}} \leq v^{J+1} 2^{-2J} (v/2)^{2} (2/v)^{2\tilde{N}} \\
&=(2/v)^{2(\tilde{N}-J) - 2} v^{-J} v \leq v (2/v)^{N} v^{-J}.
\end{align*}
It follows from (\ref{0704_6}) that
\begin{align*}
S_1 \leq c_\infty^2 v (2/v)^{N} \sum_{J=1}^{\tilde{N}-1-N/2} v^{-J} \leq c_\infty^2 (2/v)^N \left( \frac{1}{1-1/v} \right)  < 1/4.
\end{align*}

If $\alpha > 1$, then for each $J \leq  \tilde{N} -1 - N/2 $ we have 
\begin{align*}
\sum_{j \in V_J} \frac{j^{2(\alpha - 1)}}{|z - t_j|^2} &\leq \#V_J \frac{2^{2J}}{v^{2(\tilde{N}-1) \alpha}} 
\leq v^{J+1} \frac{2^{2J}}{(2v)^{2(\tilde{N}-1)}} \\
&= v^{J- 2\tilde{N} + 3} 2^{2J - 2\tilde{N} + 2} \leq v (2v)^{2J - 2\tilde{N} + 2}v^{-J} \\
&\leq v ( 2v)^{2(J-\tilde{N})+2} v^{-J} \leq v (2v)^{-N} v^{-J}.
\end{align*}
So it follows from (\ref{0704_6}) that 
\begin{align*} 
S_1 \leq c_\infty^2 v (2v)^{-N} \sum_{J=1}^{\tilde{N}-1-N/2 } v^{-J}  \leq c_\infty^{2} (2v)^{-N} \left( \frac{1}{1-1/v} \right) < 1/4.
\end{align*}
Now
\begin{align*}
S_2 &= \sum_{J = \tilde{N}- N/2}^{\infty} \sum_{j \in V_J} \frac{c_j^2 j^{2 (\alpha -  1)}}{|z - t_j|^2} \\
&\leq \sup_{\substack{j \in V_J \\ J \geq \tilde{N} - N/2}} 
\left[
\left( \sum_{\substack{J=\tilde{N}-N/2 \\ J \neq 
\tilde{N}-1, \tilde{N}, \tilde{N}+1}}^{\infty} + \sum_{J=\tilde{N}-1, \tilde{N}, \tilde{N}+1} \right) \sum_{j \in V_J} 
\frac{j^{2(\alpha -1 )}}{|z - t_j|^2}
\right].
\end{align*}
Let $\text{Re}z \in (t_k, t_{k+1}], k \in V_{\tilde{N}-1} \cup V_{\tilde{N}} \cup V_{\tilde{N}+1}$. Then 
\begin{align*}
|z - t_k|^2 \geq (v/2)^2 k^{2(\alpha -1)} \geq (v/2)^2 2^{2(\tilde{N}-1)}
\end{align*}
and for $j \neq k$, $j \in  V_{\tilde{N}-1} \cup V_{\tilde{N}} \cup V_{\tilde{N}+1}$ we have
\begin{align*}
|z - t_j|^2 \geq (\kappa/2)^2 |j - k|^2 2^{2(\tilde{N}-1)}.
\end{align*}
Thus, 
\begin{align*}
\sum_{J= \tilde{N}-1,\tilde{N},\tilde{N}+1} \sum_{j \in V_J} &\frac{j^{2(\alpha -1)}}{|z - t_j|^2}  \\
&\leq 
\frac{2^{2(\tilde{N}+1)}}{(\kappa/2)^2 2^{2(\tilde{N}-1)}}+\sum_{J= \tilde{N}-1,\tilde{N},\tilde{N}+1} 
\sum_{\substack{j \in V_J\\ j \neq k}} \frac{2^{2(\tilde{N}+1)}}{(v/2)^2|j-k|^2 2^{2(\tilde{N}-1)}}
\\
&\leq 
\frac{16}{c^2} \left(1+ 2 \sum_{j=1}^{\infty} j^{-2} \right) = \frac{16}{\kappa^2} \left( 1 + 2 \pi^2/3 \right).
\end{align*}
Furthermore, for $0< \alpha < 1$
\begin{align*}
\sum_{J= \tilde{N} - N/2}^{\infty} \sum_{j \in V_J} \frac{j^{2(\alpha - 1)}}{|z - t_j|^2} 
&\leq \sum_{\substack{J = \tilde{N} - N/2 \\ J \neq \tilde{N}-1, \tilde{N}, \tilde{N}+1}}^{\infty} 
\frac{(\#F_J) \,  2^{-2J}}{(v/2)^{2(J-1)}} \\
&\leq 4 \sum_{J=\tilde{N}-M}^{\infty} \frac{v^{J+1}}{v^{2J-2}} \leq 4v \sum_{J=1}^{\infty} v^{-J} \leq 4\left( \frac{1}{1-1/v}\right).
\end{align*}

By a similar argument, if $\alpha > 1$ we also have
\begin{align*}
\sum_{J= \tilde{N} - N/2}^{\infty} \sum_{j \in V_J} \frac{j^{2(\alpha - 1)}}{|z - t_j|^2} \leq 4\left( \frac{1}{1-1/v}\right).
\end{align*}
Hence, by (\ref{0704_7})
\begin{align*}
S_2 \leq  \sup_{\substack{j \in V_J \\ J \geq \tilde{N} - N/2}}  \left((16/\kappa^2)(1+ 2 \pi^2/3) +\frac{4}{1-1/v}  \right) < 1/4.
\end{align*}
Now suppose that $\text{Re} z \leq t_\ell + ( \kappa/2 ) \ell^{\alpha -1}$ so that 
$\text{dist}(z,\tau)^2 \geq Y^2$.
Suppose $0 < \alpha < 1.$ Then 
\begin{align*}
\sum_{j=1}^{\infty} &\frac{c_j^2 j^{2(\alpha -1)}}{|z - t_j|^2}  
\leq c_\infty^2 \sum_{J=1}^{N} \frac{ (\#V_J) 2^{-2J}}{Y^2} + 
\sup_{\substack{j \in V_J \\ j \geq N}} c_j^2  \sum_{J=N+1}^{\infty} \frac{(\#V_J) 2^{-2J}}{v^{2\alpha(J-1)}} \\
&\leq \frac{c_\infty^2 v}{Y^2} \sum_{J=1}^{N} (\sqrt{v}/2)^{2J} + 
\sup_{\substack{j \in V_J \\ J \geq N}} \sum_{J=N+1}^{\infty} (2/v)^2 v^{1-J} < 1/2
\end{align*}
by (\ref{0704_7}) and (\ref{0704_8}).
We have shown 
\begin{align} \label{0722_7}
\| (I - R^0(z)B)^{-1} \| \leq 2
\end{align}
so that by (\ref{0722_6}) $R(z)$ is well-defined.

By a similar argument for $\alpha > 1$ we also have 
\begin{align*}
\sum_{j=1}^{\infty} &\frac{c_j^2 j^{2(\alpha -1)}}{|z - t_j|^2}  
\leq 1/2.
\end{align*}
We omit the details.

Now, definition (\ref{0704_5}) implies that for $z \in \Lambda_n$,
\begin{align} \label{0722_8}
\|R^0(z) \| \leq 
\begin{cases} 
(\kappa/2) n^{1-\alpha} \quad \text{if} \quad 1/2 < \alpha \leq 1, \\
(\kappa/2) (n-1)^{1-\alpha} \quad \text{if} \quad 1 < \alpha < \infty.
\end{cases}
\end{align}
Hence, inequality (\ref{0722_5}) follows from (\ref{0722_6}) and (\ref{0722_7}) together with 
(\ref{0722_3}) and (\ref{0722_8}).

\section{Proof of Theorem \ref{0704_11}}
\begin{proof}
For the case of $\alpha = 1$, this theorem is proven in the paper \cite{admt2}.  
Henceforth, we assume $\alpha \neq 1$.
By Lemma \ref{0710_1} it suffices to show there exists an integer $N_*$ such that 
\begin{align} \label{0722_12}
\sum_{n \geq N_*} \| Q_n^0 (Q_n - Q_n^0) f \|^2 \leq 1/2.
\end{align}
Fix $n > N$ and $f = \sum f_k \phi_k \in H$ with $\| f \| = 1$.
Then 
\begin{align*}
Q_n - Q^0_n &= \frac{1}{2 \pi i} \int_{\Lambda_n} (R(z) - R^0(z)) dz \\
	    & = \frac{1}{2 \pi i} \int_{\Lambda_n} R(z) B R^0(z) dz.
\end{align*}
Hence, 
\begin{align*}
\| (Q_n - Q_n^0) \|^2 &= \frac{1}{2 \pi } \| \int_{ \Lambda_n} R(z) B R^0(z)f dz \|^2 \\
		& \leq \frac{1}{2\pi} \left[ \int_{\Lambda_n} \| R(z) B R^0(z) f \| dz \right]^2 \\
		&  =   \frac{1}{2\pi} \left[ \int_{\Lambda_n} \| \sum_{k=1}^{\infty} \frac{f_k R(z) B \phi_k}{z-t_k}\|  dz\right]^2. 
\end{align*}
Now define $z^*_n \in \Lambda_n$ to be a point at which the following sum attains its maximum,
\begin{align*}
\| \sum_{k=1}^{\infty} \frac{f_k B \phi_k}{z - t_k} \| \quad z \in \Lambda_n.
\end{align*}
Combining (\ref{0722_3}) with (\ref{0722_5}) yields
\begin{align*}
| \Lambda_n |^2 \|R(z) \|^2 \leq 16 \kappa^2. 
\end{align*}
So,
\begin{align*}
\| (Q_n - Q_n^0) \|^2 \leq 
\frac{16 \kappa^2}{2\pi} \left[\| \sum_{k=1}^{\infty} \frac{f_k B \phi_k}{z^*_n-t_k}\|  \right]^2.
\end{align*}
Recall the constant $C$ from Lemma \ref{0722_9}. 
Condition (\ref{0704_4}) implies that there exists an absolute constant $N_*$ such that 
 \begin{align} \label{0722_11}
 \|B \phi_k \| \leq \frac{2 \pi}{32 \kappa^2 C} k^{\alpha - 1} \quad \forall \, k \geq N_*.
 \end{align}
 Thus
\begin{align} \label{0722_10}
\sum_{n \geq N_*} \| Q_n^0 (Q_n - Q_n^0) f \|^2 &\leq 
\sum_{n \geq N_*} \| (Q_n - Q_n^0) f \|^2  \\
\notag              
&\leq \frac{16 \kappa^2}{2\pi} \sum_{n \geq N_*} \left[\| \sum_{k=1}^{\infty} \frac{f_k B \phi_k}{z^*_n-t_k}\|  \right]^2. 
\end{align}
Finally, combining (\ref{0722_10}) with Lemma \ref{0722_9} and (\ref{0722_11}) yields (\ref{0722_12}) and the proof is complete.
\end{proof}
\section{Further remarks}
\subsection{} \label{0404_2}
Our statement of Theorem \ref{0704_11} required  the condition  
\begin{align} \label{0404_3}
\lim_{k \rightarrow \infty} c_k =0
\end{align}
where $\{c_k\}$ is defined in (\ref{0704_4}).
With a careful accounting of 
quantities appearing in the proof of Theorem \ref{0704_11} we could have written a constant $c^*$ such that 
the condition (\ref{0404_3}) could be
replaced by the weaker condition  
\begin{align} \label{0404_4}
\lim \sup c_k \leq c^*.
\end{align}

However, condition  (\ref{0404_3}) or (\ref{0404_4}) could not be weakened in a significant way: an 
assumption $\lim \sup c_k < \infty$ would not guarantee the statement of Theorem \ref{0704_11}. 
A counterexample in the case $\alpha = 1$ is given in \cite{admt2}, Section 6.3.

Now we'll adjust the constructions of \cite{admt2} to get an operator $B$, 
with 
\begin{align} \label{0405_1}
\sup_{m} \{ \|B \phi_k \| (t_{2m} - t_{2m-1})^{-1},\quad  k=2m-1,\, 2m \} = 1/2
\end{align}
such that the perturbation $L=T+B$ has a discrete spectrum, all points of $\text{Sp}(T+B)$ are simple 
eigenvalues, the system $\{ \psi_k \}$ of eigenvectors of $L$ is complete, but it is \emph{not} a basis in $H$.
If $t_n = n^\alpha, \quad 0 < \alpha <\infty$, then (\ref{0405_1}) guarantees
that $c_\infty  \leq 1/2$.

Special $2$-dimensional blocks play an important role in this construction. 
\\
Put 
\begin{align} \label{0405_2}
b = \begin{bmatrix}
\phantom{-}0 & s \\
-s & 0
\end{bmatrix}, \quad 0 < s < 1, \quad 
s^2 + h^2 = 1, \quad 0 < h << 1.
\end{align}
This choice is a slight adjustment of a $2$-dimensional 
block (64) in \cite{admt2}. It simplifies elementary calculations of the 
$\text{Angle}(g^+,g^-)$, etc., for example. 
Such a block (\ref{0405_2}) could be used to get the same counterexample in 
Section 6.3, \cite{admt2} instead of (64) there. 
Of course, $\| b \| = s$ in $\mathbb{C}^2$ in the Euclidean norm. 

We have
\begin{align*}
\begin{bmatrix}
0 & 0 \\
0 & 2
\end{bmatrix}
+ b = \begin{bmatrix} 1 & \phantom{-}0 \\ 0 & -1 \end{bmatrix} + c, 
\quad c = \begin{bmatrix}-1 & s \\ -s & 1 \end{bmatrix}
\end{align*}
and
\begin{align*}
cg^{\pm} &= \pm h g^{\pm} \quad \text{where} \\
g^{\pm} &= (1,  G^{\pm 1}), \quad G = \sqrt{ \frac{1+h}{1-h} }.
\end{align*}
If $\alpha = \text{Angle}(g^+, g^-)$ then
\begin{align*}
(\cos \alpha )^2 = \frac{(g^+, g^-)^2}{ \| g^+ \|^2 \cdot \|g^- \|^2} = 1-h^2 = s^2.
\end{align*}
So $\sin \alpha = h$.

If $f = \Phi_0(f) u_0 + \Phi_1(f) u_1$ is the standard basis decomposition in $\mathbb{C}^2$ then 
\begin{align} \label{0404_5}
\| \Phi_0 \| = \| \Phi_1 \| = 1/ \sin \alpha = 1/h.
\end{align}
Now we define $B = \{  b(m) \}$ where $b(m)$ are $2$-dimensional blocks 
\begin{align*}
\frac{1}{2} (t_{2m} - t_{2m-1} ) \begin{bmatrix} \phantom{-}0 & s \\ -s & 0\end{bmatrix}, \quad  s= s(m), 
\quad \text{say} \quad &s(m)^2 + (1/m)^2 = 1, 
\end{align*}
on $\mathbb{C}^2 = E_m := \text{Span} \{ \phi_{2m-1}; \phi_{2m} \}$.

Then $E_m$ are invariant subspaces of $T+B$ and , (compare to \cite{admt2}, Lemma 14),
\begin{align*}
(T+B)_m = \frac{1}{2} ( t_{2m} + t_{2m-1} ) + \frac{1}{2} (t_{2m} - t_{2m-1}) 
\begin{bmatrix}-1 & s \\ -s & 1 \end{bmatrix}
\end{align*}
and 
\begin{align*}
(T+B)_m \psi_{m}^{\pm} = \left(\frac{1}{2}(t_{2m} + t_{2m-1}) \pm \frac{1}{2}(t_{2m}-t_{2m-1}) h \right), \quad h = 1/m,
\end{align*}
where 
\begin{align*}
\psi_m^{\pm} = g^{\pm} ( m) = \phi_{2m-1} + G^{\pm 1} \phi_{2m}.
\end{align*}

We omit further details. With the explicit formulas given it is easy to see that (\ref{0404_5}) with $h = 1/m$ 
guarantees that $\{ \psi_m^{\pm} \}_{1}^{\infty}$ is \emph{not} a basis.
\subsection{}
As an application of Theorem $5$, consider the differential operator $T$ on $L^2(\mathbb{R})$ defined by
\begin{align} \label{0801_1}
Ty = -y'' + |x|^\beta y, \quad \text{with} \quad \beta > 1.
\end{align}
The spectrum of $T$ consists of an infinite set of 
eigenvalues 
\begin{align*}
\text{Spec}T = \{ \lambda_0 \leq \lambda_1 \leq \lambda_2 \leq \ldots \} \quad 
\text{with} \quad \lim_{n\rightarrow \infty} \lambda_n = \infty.
\end{align*}
The growth of the sequence of eigenvalues is described by the formula
\begin{align} \label{2_26_1}
\lim_{n \rightarrow \infty} \left[ 2 \int_{0}^{\lambda_n^{1/\beta}} (\lambda_n - |x|^{\beta})^{1/2} dx - (n+1/2) \pi \right] = 0.
\end{align}
For a proof, see the last section of \cite{titchmarsh}.
It follows from (\ref{2_26_1}) by a change of variables that
\begin{align} \label{2_26_2}
\lim_{n \rightarrow \infty} \left[ 2 \lambda_n^{\frac{2+\beta}{2\beta}} \Omega_\beta - (n+1/2) \pi \right] = 0 \quad \text{with} 
\quad \Omega_\beta = 2 \int_{0}^{1} (1 - x^{\beta})^{1/2} dx. 
\end{align}
Subtracting the $n^{\text{th}}$ term from the $n+1^{\text{st}}$ term in (\ref{2_26_2}) we derive
\begin{align} \label{2_26_3}
\lim_{n \rightarrow \infty} \left[ \lambda_{n+1}^{\frac{2+\beta}{2 \beta}} - \lambda_n^{\frac{2+\beta}{2 \beta}} \right] =\pi/\Omega_\beta.
\end{align}
From (\ref{2_26_3}) it is straightforward to show that there exist constants $C > 0$, $N \in \mathbb{N}$ (depending on $\beta$) such that
\begin{align} \label{2_26_4}
\lambda_{n+1}- \lambda_n \geq C n^{\alpha - 1} \quad  \forall n > N, \quad \alpha = \frac{2\beta}{\beta+2}.
\end{align}
Let us mention the papers \cite{shin1}, \cite{shin2} where the eigenvalues for the eigenproblem 
$-y_{zz} + q(z)y =\lambda y$ are analyzed for polynomial $q(z)$.

Denote the eigenfunction corresponding to $\lambda_n$ by $\phi_n$
and define  
$$
L(p; \alpha) = \{ b: b(x) (1+|x|^2)^{-\alpha/2} \in L^p(\mathbb{R})\}.
$$
We have the following bound on the behavior of $\phi_n$
\begin{align} \label{2_26_5}
|\phi_n(x)| \leq  \frac{K \exp(Q(x))}{|\lambda_n - |x|^{\beta}|^{1/4} + \lambda_n^{\frac{\beta-1}{6\beta}}} \quad \text{with}\\ 
\notag
Q(x) = \begin{cases}
	- \int_{\lambda_n^{1/\beta}}^{x} (\lambda_n - |t|^{\beta})^{1/2} dt, \quad & x > \lambda_n^{1/\beta} \\
	0, & |x| \leq \lambda_n^{1/\beta} \\
	 \int_{\lambda_n^{1/\beta}}^{x} (\lambda_n - |t|^{\beta})^{1/2} dt, \quad & x < -\lambda_n^{1/\beta}. 
	\end{cases}
\end{align}
For the case $\beta = 2$, this inequality is proven in \cite{akhmerova}, a few changes to this proof 
boost it to cover $\beta > 1$. We omit the details. Such constructions for Schrodinger operators with
turning points are discussed in \cite[Ch 8,11]{olver}. 
 
By an argument like that given for Lemma $8$ in \cite{admt2} it follows from (\ref{2_26_5}) that if $b \in L(p;\alpha)$ then
$\|b \phi_n \|_2 \leq C n^{\frac{2 \beta \xi}{\beta + 2}}$ where
\begin{align} \label{2_26_6}
\xi &= \max \{ \frac{1}{3\beta}\left( 1 - \beta + 3 \alpha + (\beta - 1)/p\right);
\frac{1}{\beta} \left(\alpha - \beta/4 + 1/2 - 1/p \right) \} \\
\notag
&= \begin{cases}
	\frac{1}{3\beta}\left( 1 - \beta + 3 \alpha + (\beta - 1)/p\right), 
\quad &2 \leq p < 4 \\
	\frac{1}{\beta} \left(\alpha - \beta/4 + 1/2 - 1/p \right) \}, \quad &4 < p.	
   \end{cases}
\end{align}
In the exceptional case $p=4$ we have
\begin{align} \label{3_17_1}
\|b \phi_n \|_2 \leq C n^{\frac{2\alpha}{\beta+2} + \frac{1-\beta}{2(\beta+2)}} \log(n+2)
\end{align}
The following Proposition follows from 
(\ref{2_26_4}), (\ref{2_26_6}), (\ref{3_17_1}) and Theorem \ref{0704_11}. Our 
statement of Theorem \ref{0704_11} does not include $\alpha = 1$ (and therefore $\beta = 2$); a proper forumalation and proof 
of this Theorem for $\alpha = 1$ can be found in 
\cite{admt2}.
\begin{proposition}
Let $T \in (\ref{0801_1})$, $b \in L(p, \alpha)$, and define the operator $B$  on 
$L^2(\mathbb{R})$ by $Bf = b(x)f(x)$. \\
Suppose that  
\begin{align}
\begin{cases}
\beta - 1 <p ( -4 + 5 \beta/2 - 3\alpha)\quad \text{if} \quad  &2 \leq p < 4 \\
2 > p(3  - 3\beta/2+ 2 \alpha) \quad \text{if} \quad &4 \leq p.
\end{cases}
\end{align} 
Then the system of eigen and associated 
functions for the operator $T+B$ is an unconditional basis. 
\end{proposition}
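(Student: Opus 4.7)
The plan is to verify that the operator $T$ of (\ref{0801_1}) and the multiplication perturbation $B$ satisfy the hypotheses of Theorem \ref{0704_11} with $\alpha = 2\beta/(\beta+2)$. Note that since $\beta > 1$ and (implicitly) $\beta \neq 2$, this $\alpha$ lies in $(1/2, \infty) \setminus \{1\}$, so Theorem \ref{0704_11} applies (the case $\beta = 2$, i.e., $\alpha = 1$, is covered by the analogous result of \cite{admt2}, as already noted in the remark preceding the Proposition).

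The spacing hypothesis (\ref{0704_1}) is exactly the content of (\ref{2_26_4}), so it is immediate. The substantive step is to confirm the perturbative hypothesis (\ref{0704_4}): $\|B\phi_n\| = c_n n^{\alpha-1}$ with $c_n \to 0$. Using the pointwise bound (\ref{2_26_5}) together with the inclusion $b \in L(p, \alpha)$, the estimates (\ref{2_26_6}) and (\ref{3_17_1}) give
\begin{equation*}
\|B\phi_n\|_2 \leq C n^{2\beta\xi/(\beta+2)},
\qquad \text{with } \xi \text{ as in (\ref{2_26_6})},
\end{equation*}
possibly with an extra $\log(n+2)$ in the endpoint case $p=4$. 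Since $\alpha - 1 = (\beta-2)/(\beta+2)$, condition (\ref{0704_4}) with $c_n \to 0$ is equivalent to the strict inequality $\xi < \tfrac{1}{2} - \tfrac{1}{\beta}$.

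Substituting the two branches of $\xi$ from (\ref{2_26_6}) and clearing denominators is a routine calculation that yields precisely the two cases in the statement: the branch $2 \leq p < 4$ produces $\beta - 1 < p(-4 + 5\beta/2 - 3\alpha)$, while $p > 4$ produces $2 > p(3 - 3\beta/2 + 2\alpha)$. At $p=4$ both inequalities coincide (giving $3\beta > 5 + 4\alpha$), and the strict inequality absorbs the logarithmic factor from (\ref{3_17_1}). This bookkeeping (pairing the right branch of $\xi$ with the right range of $p$ and checking that the resulting exponent is strictly less than $\alpha - 1$) is the only nontrivial step; there is no new analysis to perform.

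With (\ref{0704_1}) and (\ref{0704_4}) both verified, Theorem \ref{0704_11} produces a bounded invertible operator $W$ so that $\{U_\ell, Q_{\ell+1}, Q_{\ell+2}, \ldots\}$ is similar, via $W$, to the orthogonal resolution $\{\sum_{k \leq \ell} Q_k^0\} \cup \{Q_k^0\}_{k > \ell}$. Since for each $k > \ell$ the projection $Q_k^0$ is one-dimensional and $\text{Range}(U_\ell)$ is finite-dimensional, choosing any basis in each invariant subspace $\text{Range}(U_\ell)$, $\text{Range}(Q_k)$ produces a complete system of eigen- and associated functions of $T+B$ whose image under $W$ is an orthogonal basis of $H$. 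Boundedness of $W^{\pm 1}$ then upgrades this to an unconditional basis, which is the claim of the Proposition.
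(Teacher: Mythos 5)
Your proposal is correct and follows exactly the route the paper intends: the paper's entire proof is the remark that the Proposition ``follows from (\ref{2_26_4}), (\ref{2_26_6}), (\ref{3_17_1}) and Theorem \ref{0704_11},'' and you have simply carried out the arithmetic showing that the stated inequalities are equivalent to $\xi < \tfrac12 - \tfrac1\beta$, i.e.\ to hypothesis (\ref{0704_4}) with the spectral exponent $\tfrac{2\beta}{\beta+2}$. The verification of both branches (and the coincidence at $p=4$, where the strict inequality absorbs the logarithm) checks out.
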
 
Acknowledgements: \quad 
We would like to thank P. Djakov, K. Shin, A. Shkalikov 
for interesting discussions related to this work.

\end{document}